\theoremstyle{plain}
\newtheorem{Theorem}{Theorem}[section]
\newtheorem{Lemma}{Lemma}[section]
\newtheorem{Corollary}{Corollary}[section]
\newtheorem{Example}{Example}[section]
\theoremstyle{remark}
\newtheorem{remark}{Remark}
\numberwithin{equation}{section}
\numberwithin{figure}{section}
\numberwithin{remark}{section}
\begin{document}
	
	\title{Entire subsolutions of a kind of $k$-Hessian type equations with gradient terms}
	
	\author{JingWen Ji}
	\address{College of Mathematics and Statistics, Nanjing University of Information Science and Technology, Nanjing 210044, P.R. China}
	\email{20201215013@nuist.edu.cn}
	
	\author{Feida Jiang*}
	\address{School of Mathematics and Shing-Tung Yau Center of Southeast University, Southeast University, Nanjing 211189, P.R. China}
	\email{jiangfeida@seu.edu.cn}
	
	\author{Mengni Li}
    \address{School of Mathematics, Southeast University, Nanjing 211189, P.R. China}
    \email{lmn@seu.edu.cn}
	
	\thanks{This work was supported by the National Natural Science Foundation of China (No. 11771214) and the Natural Science Foundation of Jiangsu Province (No. BK20220792).}

	\subjclass[2010]{35J60, 35A01, 35B08}
	
	\date{\today}
	\thanks{*corresponding author}
	
	\keywords{Generalized Keller-Osserman condition; existence; nonexistence; $k$-Hessian type equation; entire subsolutions.}

	\begin{abstract}
	In this paper, we consider a kind of $k$-Hessian type equations $S_k^{\frac{1}{k}}(D^2u+\mu|D u|I)= f(u)$ in $\mathbb{R}^n$, and provide a necessary and sufficient condition of $f$ on the existence and nonexistence of entire admissible subsolutions, which can be regarded as a generalized Keller-Osserman condition. The existence and nonexistence results are proved in different ranges of the parameter $\mu$ respectively, which embrace the standard Hessian equation case ($\mu=0$) by Ji and Bao (Proc Amer Math Soc 138: 175--188, 2010) as a typical example. The difference between the semilinear case ($k=1$) and the fully nonlinear case ($k\ge 2$) is also concerned.
\end{abstract}

\maketitle

\section{Introduction}\label{Section 1}
In this paper, we study a kind of $k$-Hessian type partial differential equations of the form,
\begin{equation}\label{1.0}
	S_k\left[ D^2u-A(x,u,Du)\right] = B(x,u,Du) ,\quad {\rm in}\ \mathbb{R}^n,
\end{equation}
where $Du$ and $D^2u$ denote the gradient vector and the Hessian matrix of $u$, $A$ is a $n\times n$ symmetric matrix function defined on $\mathbb{R}^n\times\mathbb{R}\times \mathbb{R}^n$, and $B:\mathbb{R}^n\times\mathbb{R}\times \mathbb{R}^n \to \mathbb{R}$ is a scalar valued function.  
For $k=1,\cdots,n$, the $k$-Hessian operator $S_k$ denotes the $k$-th elementary symmetric function defined by
\begin{equation*}
	S_k[r]:=S_{k}\left(\lambda(r)\right)  =\sum_{1\le i_{1}<\cdots< i_{k}\le n}\lambda_{i_{1}}\cdots\lambda_{i_{k}},
\end{equation*}
where $\lambda=(\lambda_1,\cdots,\lambda_n)$ denote the eigenvalues of the matrix $r$.

The classical model of \eqref{1.0} with $A=0$ has been investigated in the pioneering paper of Caffarelli, Nirenberg and Spruck \cite{CNS}. 
For extensive studies and outstanding results on the standard Hessian equation $S_k(D^2u)=B(x,u,Du)$, see, for example, \cite{CW,T,TW} and the references therein.
If $A\neq 0$, for $k=n$, equation \eqref{1.0} corresponds to a class of Monge-Amp\`{e}re type equations arising in optimal transportation \cite{MTW} and geometrical optics problems \cite{GH,W1}, which have been studied in \cite{LTW,JTY,HJL}. 	
For general $k$, if $A$ depends on $x$, a class of $k$-Hessian type equations \eqref{1.0} on Riemannian manifolds are studied by Urbas \cite{U}, Guan \cite{G,G2}, and others; 
if $A$ depends on $Du$, the $k$-Hessian type equations \eqref{1.0} is related to conformally invariant elliptic equations \cite{LL} and the $k$-Yamabe problems in conformal geometry \cite{V}; 
if $A$ depends on $x$, $u$, $Du$, the global regularity for Dirichlet and oblique boundary value problems of augmented Hessian equation \eqref{1.0} are studied in \cite{JT1,JT2}. 

More specifically, when $k=1$, $A=0$ and $B(x,u,Du)=f(u)$, \eqref{1.0} reduces to the classical Poisson equation
\begin{equation}\label{1.2}
	\Delta u=f(u),\quad {\rm in}\ \mathbb{R}^n.
\end{equation}
It is worthwhile to point out that Keller \cite{K} and Osserman \cite{O} simultaneously and independently proved that when $f$ is a positive, nondecreasing and continuous function, equation \eqref{1.2} admits a subsolution which exists in the whole space $\mathbb{R}^n$ if and only if 
\begin{equation}\label{1.3}
	\int^{+\infty}\left(\int_{0}^{\tau}f(t)\,{\rm d}t \right) ^{-\frac{1}{2}}\,{\rm d}\tau=+\infty,
\end{equation} 
where and later we omit the lower limit of integral to admit an arbitrary positive number, and the definition of subsolutions will be introduced later in this section.  
The growth condition on $f$ at infinity \eqref{1.3} is the celebrated Keller-Osserman condition.
For general $k$, Ji and Bao \cite{JB} extended the classical Keller-Osserman condition \eqref{1.3} to the standard Hessian equation
\begin{equation}\label{1.4}
	S_k^\frac{1}{k}(D^2u)=f(u)\quad {\rm in}\ \mathbb{R}^n,
\end{equation}
where $S_k^\frac{1}{k}$ denotes the $k$-th root of $S_k$.
Specifically, for $2\leq k<n$ and $k=n$, the generalized Keller-Osserman conditions for the solvability of subsolutions to \eqref{1.0} with $A=-\alpha I$ and $B=f^k(u)$, which include \eqref{1.4} as a very special case, are established in \cite{D1,D2} respectively.
For the generalized Keller-Osserman condition on $k$-Yamabe type equation, one can see \cite{BJL}.
In the fully nonlinear framework, if we replace the $k$-Hessian operator of \eqref{1.0} with more general second-order degenerate elliptic operators, analogous results have been more recently obtained in \cite{CDLV1,CDLV3}.
Remarkably, condition \eqref{1.3} is often used to study the boundary blow-up problems, see for example \cite{J,PV,S}. 

As far as $k$-Hessian type equations with gradient terms are concerned, in the case of $k=1$, some relevant research studied the following boundary blow-up problem
\begin{equation}\label{1.5}
	\left\{
	\begin{aligned}
		&\Delta u\pm g(|D u|)=f(u), \quad {\rm in}\ \Omega,\\
		&u\to +\infty,\quad \ {\rm dist}(x,\partial\Omega)\to 0 ,
	\end{aligned}
	\right.
\end{equation}
which appears in stochastic control problems with state constraints.  
Lasry and Lions \cite{LLions} first considered the case of $\pm g(|D u|)=-|D u|^p$ and $f(u)=\lambda u+h$, where $p>1$, $\lambda>0$ and $h$ is a given smooth function in $\Omega$. 
In this case, the equation in \eqref{1.5} is a very particular case of the so-called Hamilton-Jacobi-Bellman equations.
Subsequently, for $g(|D u|)=|D u|^q$, $q>0$, problem \eqref{1.5} was considered  by Bandle and Giarrusso in  \cite{BG}.
For general case, the existence and nonexistence of nonnegative solutions to problem \eqref{1.5} are proved in \cite{SMA}, depending on new integral conditions of Keller-Osserman type involving $f$ and $g$.
More interesting to us is that based on the degenerate maximal Pucci operator $\mathcal{M}_{0,1}^+$ and the $k$-partial Laplacian operator $\mathcal{P}_k^{+}$, Capuzzo Dolcetta, Leoni and Vitolo \cite{CDLV2} studied the more general fully nonlinear degenerate elliptic inequality with gradient term
\begin{equation}\label{1.6}
	F(x,D^2u)\geq f(u)+g(u)|Du|^q,\quad x\in \mathbb{R}^n,
\end{equation} 
for $q\in (0,2]$, and gave a necessary and sufficient condition for the solvability of \eqref{1.6} which extended the classical Keller-Osserman condition to semilinear equations.
We note that in recent research paper involving $k$-Hessian operator with gradient terms, Cui \cite{C} proved the existence of entire $k$-convex radial solutions to the Dirichlet problem of $k$-Hessian system
\begin{equation*}
	\left\{
	\begin{aligned}
		&S_k(\lambda(D^2u+\mu| D u|I))=p(|x|)f_1(u)f_2(v),\quad x\in B_1(0),\\
		&S_l(\lambda(D^2v+\mu| D v|I))=q(|x|)g_1(u)g_2(v),\quad \ x\in B_1(0),\\
		&u=v=0,\qquad \qquad\qquad\qquad\qquad\qquad\qquad \ x\in \partial B_1(0),
	\end{aligned}
	\right.
\end{equation*}
by using the monotone iterative method. For related studies of $k$-Hessian systems with $\mu=0$, see, for instance, \cite{ZZ,ZQ,JJD}.

In this paper, we are interested in studying the following $k$-Hessian type equation
\begin{equation}\label{1.1}
	S_k^{\frac{1}{k}}(D^2u+\mu|D u|I)=f(u),\quad {\rm in}\ \mathbb{R}^n,
\end{equation}
where $\mu$ is a constant,  $|D u|$ denotes the norm of gradient
vector of $u$, $I$ is the $n\times n$ identity matrix and $f$ is a given function defined on  $\mathbb{R}$. 
When $\mu=0$, equation \eqref{1.1} is the standard $k$-Hessian equation \eqref{1.4}, 
and when $\mu <0$, equation \eqref{1.1} satisfies the well-known Ma-Trudinger-Wang condition in \cite{MTW} at the points $D u\neq 0$. In fact, when $A$ in \eqref{1.0} depends on $D u$, the Heinz-Lewy example in \cite{Sch} shows that one can not expect $C^1$ regularity for solution $u$ in general. Even for the simpler equation \eqref{1.1}, it is not a trivial problem towards the existence of smooth solutions. Therefore, we only focus on this simpler case and discuss the entire smooth subsolutions of \eqref{1.1} in this paper.

To work in the realm of elliptic equations, we let 
\begin{equation*}
	\Gamma_{k}=\left\lbrace \lambda\in\mathbb{R}^{n}:S_{p}(\lambda)>0, \ \forall p=1,\cdots,k\right\rbrace
\end{equation*} 
be a convex cone and we always 
assume that $\lambda(D^2u+\mu|D u|I)\in \Gamma_{k}$.
Here, a function $u\in C^2(\Omega)$ is called admissible in $\Omega$ if $\lambda(D^2u+\mu|D u|I)\in \Gamma_{k}$ for all $x\in \Omega$. 
Let $\Phi_{\mu}^k$ denote the class of admissible functions in $\Omega$, i.e.,
\begin{equation*}
	\Phi_{\mu}^k(\Omega):=\{u\in C^2(\Omega):\lambda(D^2u+\mu|D u|I)\in \Gamma_{k}, \ \forall x\in \Omega\}.
\end{equation*}
A function $u\in C^2(\mathbb{R}^{n})$ is said to be an entire admissible subsolution of \eqref{1.1}, if $u\in \Phi_{\mu}^k(\mathbb{R}^n)$  and $u$ satisfies the following inequality
\begin{equation}\label{subsol}
	S_k^{\frac{1}{k}}(D^2u+\mu|D u|I)\geq f(u),\quad {\rm in}\ \mathbb{R}^n.
\end{equation}

\begin{remark}
	Specially, we note that if $k=1$, equation \eqref{1.1} reduces to the semilinear equation
	\begin{equation*}
		\Delta u+n \mu |D u|=f(u),\quad {\rm in}\ \mathbb{R}^n,
	\end{equation*}
	which is the special case of \eqref{1.5} with $\pm g(|D u|)=n \mu |D u|$ and is also the special case of \eqref{1.6} with $F=\Delta$, $g(u)=-n\mu $ and $q=1$. 
\end{remark}

In this paper, we extend the results in \cite{JB} to a kind of $k$-Hessian type equations with gradient terms. Letting 
\begin{equation}\label{mu 0}
\mu_0:=\sqrt{\frac{k}{n(k+1)(C_n^k)^{\frac{1}{k}}}},
\end{equation}
then our main results are: 
	\begin{Theorem}\label{Th1.1}
	Suppose that $f\in C(\mathbb{R})$ is a positive and monotone non-decreasing function. 
	\begin{enumerate}[(i)]
		\item (Existence) For either the case $k=1$, $\mu\in(-\infty,+\infty)$; or the case $2\leq k\leq n$, $\mu \in [0,+\infty)$, if $f$ satisfies
		\begin{equation}\label{nsc}
			\int^{+\infty}\left(\int_{0}^{\tau}f^k(t)\,{\rm d}t \right) ^{-\frac{1}{k+1}}\,{\rm d}\tau=+\infty,
		\end{equation}
		then equation \eqref{1.1} admits a subsolution $u\in \Phi_{\mu}^k(\mathbb{R}^n)$.
		\item (Nonexistence) For $1\leq k\leq n$, $\mu \in (-\infty,\mu_0)$, if $f$ satisfies
		\begin{equation*}
			\int^{+\infty}\left(\int_{0}^{\tau}f^k(t)\,{\rm d}t \right) ^{-\frac{1}{k+1}}\,{\rm d}\tau<+\infty,
		\end{equation*} 
		then equation \eqref{1.1} admits no subsolution $u\in \Phi_{\mu}^k(\mathbb{R}^n)$.
	\end{enumerate}
	\end{Theorem}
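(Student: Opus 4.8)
In both directions I would pass to a one--dimensional (radial) problem. For radial increasing $u=u(r)$, $r=|x|$, the matrix $D^2u+\mu|Du|I$ has eigenvalues $u''+\mu u'$ (simple) and $\tfrac{u'}{r}+\mu u'$ (multiplicity $n-1$), so that
\[
S_k\big(D^2u+\mu|Du|I\big)=\binom{n-1}{k-1}\Big(\tfrac1r+\mu\Big)^{k-1}(u')^{k-1}\Big[u''+\Big(\tfrac{n-k}{kr}+\tfrac{n\mu}{k}\Big)u'\Big],
\]
and both the subsolution inequality \eqref{subsol} and the admissibility condition $\lambda\in\Gamma_k$ become ODE (in)equalities.

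\textbf{Existence.} For $2\le k\le n$ and $\mu\ge0$ the cleanest route is to quote Ji--Bao \cite{JB}: since $\Gamma_k+\mathbb R_{\ge0}(1,\dots,1)\subseteq\Gamma_k$ and $S_k$ is nondecreasing along $(1,\dots,1)$, any entire admissible subsolution $u$ of \eqref{1.4} is automatically admissible for the $\mu$--operator and satisfies $S_k^{1/k}(D^2u+\mu|Du|I)\ge S_k^{1/k}(D^2u)\ge f(u)$; hence \eqref{nsc} already furnishes the subsolution. For $k=1$ and $\mu<0$ the gradient term has the ``wrong'' sign, and I would integrate the radial equation $u''+(\tfrac{n-1}{r}+n\mu)u'=f(u)$, $u'(0)=0$: putting $Q=\tfrac12(u')^2$ as a function of $u$ yields, for $r$ beyond the (regular, compact) core $\{r\le \tfrac{n-1}{n|\mu|}\}$, the comparison
\[
Q'\le \sqrt2\,n|\mu|\sqrt Q+f(u),\qquad Q(u)\ge G(u)-\mathrm{const},
\]
with $G(\tau)=\int_0^\tau f$; this forces $\sqrt{Q(u)}\le C+\tfrac{\sqrt2}{2}n|\mu|\,u+\sqrt{G(u)}$, and then the elementary implication ``$\int^{+\infty}\!G^{-1/2}=+\infty\ \Rightarrow\ \int^{+\infty}\!(u+\sqrt{G(u)})^{-1}\,{\rm d}u=+\infty$'' gives $\int^{+\infty}Q^{-1/2}\,{\rm d}u=+\infty$, i.e.\ the radial solution is global; it is admissible because $\Gamma_1=\{S_1>0\}$ and $S_1(D^2u+\mu|Du|I)=f(u)>0$. (Alternatively one can invoke the semilinear existence theory of \cite{SMA,CDLV2}, whose Keller--Osserman--type solvability condition reduces, for the linear gradient term here, to \eqref{nsc}.)

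\textbf{Nonexistence.} Assume $u\in\Phi_\mu^k(\mathbb R^n)$ solves \eqref{subsol} and $\int^{+\infty}\!\big(\int_0^\tau f^k\big)^{-1/(k+1)}{\rm d}\tau<+\infty$. The plan is to trap $u$ from above by a radial boundary--blow--up supersolution on small balls. On $B_\rho$ with $\rho$ chosen so that $\tfrac1\rho+\mu>0$ (automatic if $\mu\ge0$; requiring $\rho<1/|\mu|$ if $\mu<0$) one builds a radial increasing admissible $w$ with $w\to+\infty$ at $\partial B_\rho$ and $S_k^{1/k}(D^2w+\mu|Dw|I)\le f(w)$; multiplying the radial profile equation by $w'$ and integrating gives $(w')^{k+1}=c\int_0^w f^k\,{\rm d}s+C_0$, so such a $w$ exists \emph{precisely because} the integral in the hypothesis converges. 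The restriction $\mu<\mu_0$ is exactly what keeps the construction consistent: it makes the eigenvalues of $D^2w+\mu|Dw|I$ stay in $\Gamma_k$ throughout $B_\rho$ and keeps the Maclaurin bound $S_k^{1/k}(\lambda)\le\tfrac{(C_n^k)^{1/k}}{n}S_1(\lambda)$, i.e.\ $\Delta u+n\mu|Du|\ge\tfrac{n}{(C_n^k)^{1/k}}f(u)$, compatible with absorbing the term $n\mu|Du|$ into the $k$--Hessian (the combination $\mu_0^2=\tfrac{k}{n(k+1)(C_n^k)^{1/k}}$ arises as $(n\mu)^2<\tfrac{k}{k+1}\cdot\tfrac{n}{(C_n^k)^{1/k}}$ from completing a square). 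By the comparison principle $u\le w$ in each $B_\rho(x_0)$, whence $u\le w(0)<+\infty$ on $\mathbb R^n$; a bounded--above entire admissible subsolution then contradicts the positivity of $f$ (for instance, since $\Delta u+n\mu|Du|\ge\tfrac{n}{(C_n^k)^{1/k}}f(u)>0$ and $f$ is nondecreasing, $u$ is too convex to stay bounded above --- compare with the radial solution of $S_k^{1/k}(D^2v+\mu|Dv|I)=\delta>0$ on large balls, or use the sub--mean--value inequality).

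\textbf{Main obstacle.} I expect the nonexistence direction, and in particular pinning down the \emph{sharp} constant $\mu_0$, to be the crux: one must run the radial comparison in the regime $\mu<0$, where radial admissible functions only live on balls of bounded radius, keep the boundary--blow--up profile admissible on all of $B_\rho$, and check that it is exactly $\mu<\mu_0$ (and not some other threshold) that makes the perturbation $\mu|Du|I$ small enough --- this is where the $\tfrac{k}{n(k+1)(C_n^k)^{1/k}}$ must emerge. On the existence side the only real work is the $k=1$, $\mu<0$ case, where the global solvability of the perturbed radial ODE has to be extracted from \eqref{nsc} through the elementary integral lemma quoted above.
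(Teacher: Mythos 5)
Your existence argument matches the paper in both regimes. For $2\le k\le n$, $\mu\ge 0$, the reduction via $S_k^{1/k}(D^2u+\mu|Du|I)\ge S_k^{1/k}(D^2u)$ to Ji--Bao is exactly the alternative the paper spells out. For $k=1$, $\mu<0$ the paper also extracts global solvability of the radial ODE from \eqref{nsc}, but by the cleaner integrating factor $e^{2n\mu r}$: it obtains $\bigl[\tfrac12 e^{2n\mu r}(\varphi')^2\bigr]'<e^{2n\mu r}f(\varphi)\varphi'$ and thus $\bigl(\int_0^\varphi f\bigr)^{-1/2}{\rm d}\varphi<\sqrt2\,e^{-n\mu R}{\rm d}r$ directly, whereas your $Q(u)=\tfrac12(u')^2$ substitution, as written, still needs a nontrivial step to turn $Q'\le f+\sqrt2\,n|\mu|\sqrt Q$ into the hard bound $\sqrt Q\lesssim u+\sqrt{G(u)}$ before the ``elementary implication'' can be invoked.

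The nonexistence direction is where the proposal deviates and has genuine gaps. You propose to push $S_k$ down to $S_1$ via Maclaurin and to locate $\mu_0$ by ``completing a square''; neither step appears in the paper, and I do not see how the Maclaurin reduction can produce the sharp threshold. The paper works with the exact radial identity $C_{n-1}^{k-1}r\varphi''(\varphi')^{k-1}+(C_n^k\mu r+C_{n-1}^k)(\varphi')^k=\frac{r^k}{(1+\mu r)^{k-1}}f^k(\varphi)$, and the crux is showing the second left-hand term is at most $(1-\delta)$ times the source for large $r$, so $\varphi''(\varphi')^{k-1}\gtrsim\bigl(\frac{r}{1+\mu r}\bigr)^{k-1}f^k$; multiplying by $\varphi'$ and integrating then contradicts convergence. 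Your sketch omits two essential ingredients: (i) convergence of $\int^{\infty}\bigl(\int_0^\tau f^k\bigr)^{-1/(k+1)}{\rm d}\tau$ forces $f(t)/t\to+\infty$, hence $f(\varphi)>\varphi-a$ for large $r$, which is needed to control $\int_0^r\bigl(\frac{s}{1+\mu s}\bigr)^{k-1}f^k\varphi'\,{\rm d}s$ by $\bigl(\frac{r}{1+\mu r}\bigr)^{k-1}f^{k+1}(\varphi)$; (ii) $\mu_0$ is the threshold at which $\lim_{r\to\infty}\frac{(C_n^k\mu r+C_{n-1}^k)(1+\mu r)^{(k-1)/(k+1)}}{r^{2k/(k+1)}}=C_n^k\mu^{2k/(k+1)}$ hits the critical value $\bigl(\frac{C_{n-1}^{k-1}}{k+1}\bigr)^{k/(k+1)}$ --- a ratio limit, not a quadratic form. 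Your formula $(w')^{k+1}=c\int_0^w f^k\,{\rm d}s+C_0$ is obtained precisely by discarding the term $(C_n^k\mu r+C_{n-1}^k)(\varphi')^k$ whose absorption is what costs $\mu<\mu_0$, so ``such a $w$ exists precisely because the integral converges'' is not yet justified. Finally, the paper handles $k=1$, $\mu<0$ via $(n\mu r+n-1)\varphi'<0$ for $r>(n-1)/(n|\mu|)$ (giving $\varphi''>f(\varphi)$), and $2\le k\le n$, $\mu<0$ via Remark~\ref{re2.1} (the eigenvalue $\frac{1+\mu r}{r}\varphi'$ turns negative past $r=-1/\mu$, so no radial admissible function exists globally); neither subcase surfaces in your supersolution-on-small-balls scheme.
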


	\begin{Theorem}\label{Th1.2}
	Suppose that $f\in C(\mathbb{R})$ is a positive and monotone non-decreasing function. 
	For either the case $k=1$, $\mu\in(-\infty,\mu_0)$; or the case $2\leq k\leq n$, $\mu \in [0,\mu_0)$, 
	equation \eqref{1.1} admits a subsolution $u\in \Phi_{\mu}^k(\mathbb{R}^n)$ if and only if condition \eqref{nsc} holds.
	\end{Theorem}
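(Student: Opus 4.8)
The plan is to read off Theorem~\ref{Th1.2} directly from Theorem~\ref{Th1.1}: its statement is precisely the overlap of the parameter regimes in which parts (i) and (ii) of Theorem~\ref{Th1.1} are simultaneously available. Fix $f\in C(\mathbb{R})$ positive and non-decreasing. In the case $k=1$ with $\mu\in(-\infty,\mu_0)$, one implication is Theorem~\ref{Th1.1}(i), whose existence range for $k=1$ is all of $\mathbb{R}$ and hence contains $(-\infty,\mu_0)$: if \eqref{nsc} holds, a subsolution $u\in\Phi_\mu^k(\mathbb{R}^n)$ exists. The reverse implication is the contrapositive of Theorem~\ref{Th1.1}(ii), whose nonexistence range is $\mu<\mu_0$: if \eqref{nsc} fails, no such subsolution exists. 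In the case $2\le k\le n$ with $\mu\in[0,\mu_0)$ the argument is identical, using that the existence range in Theorem~\ref{Th1.1}(i) is $[0,+\infty)\supset[0,\mu_0)$ and the nonexistence range in Theorem~\ref{Th1.1}(ii) is again $\mu<\mu_0$. Since these are exactly the two cases in the statement, the biconditional follows, and I would present the proof of Theorem~\ref{Th1.2} in these few lines once Theorem~\ref{Th1.1} is established.

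Consequently all the real content lies in Theorem~\ref{Th1.1}, which I would organize as follows. For existence, I would construct a radial admissible subsolution $u=\phi(|x|)$ with $\phi$ smooth, increasing and convex; for such $\phi$ the eigenvalues of $D^2u+\mu|Du|I$ are $\phi''+\mu\phi'$ (once) and $\phi'/r+\mu\phi'$ ($n-1$ times), which lie in $\Gamma_k$ automatically when $\mu\ge 0$, the $\mu\phi'$ shifts only enlarging $S_k$; for $k=1$ and $\mu<0$ an integrating-factor substitution absorbing the first-order term keeps the construction alive. Substituting into \eqref{subsol} turns it into an ordinary differential inequality for $\phi$ of essentially the same shape as in Ji--Bao \cite{JB}, solvable globally exactly when the integral in \eqref{nsc} diverges. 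For nonexistence, I would start from the Maclaurin inequality $S_k^{1/k}(\lambda)\le (C_n^k)^{1/k}n^{-1}S_1(\lambda)$ on $\Gamma_k$, which for any subsolution gives $\Delta u+n\mu|Du|\ge n(C_n^k)^{-1/k}f(u)$; then, passing to spherical averages (or to $\sup_{B_R}u$) and controlling the gradient term, I would derive a one-dimensional differential inequality whose solution must blow up at a finite radius whenever \eqref{nsc} fails, contradicting that $u$ is entire.

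The main obstacle, in both halves, is the gradient term $\mu|Du|I$, and it is exactly what forces the threshold $\mu_0$ of \eqref{mu 0}: in the nonexistence argument the term $n\mu|Du|$ has to be absorbed into the averaging/telescoping estimate, and the algebraic inequality that permits this closes up only when $\mu^2<\tfrac{k}{n(k+1)(C_n^k)^{1/k}}$, i.e.\ $\mu<\mu_0$. The delicate points I expect are (a) verifying $\lambda(D^2u+\mu|Du|I)\in\Gamma_k$ throughout the construction, in particular the sign bookkeeping for $\mu<0$ in the case $k=1$; and (b) carrying out the comparison with radial functions cleanly in the presence of $|Du|$, since a subsolution need not be radial and $|Du|$ cannot be differentiated freely, so an integrated formulation rather than a pointwise maximum principle is needed. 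None of this difficulty appears in Theorem~\ref{Th1.2} itself, which is purely a matter of recording the intersection of the admissible ranges.
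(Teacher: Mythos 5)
Your reduction of Theorem \ref{Th1.2} to Theorem \ref{Th1.1} is formally correct: the two regimes in Theorem \ref{Th1.2} are exactly where the existence range of Theorem \ref{Th1.1}(i) and the nonexistence range of Theorem \ref{Th1.1}(ii) overlap. But the paper organizes things in the opposite direction: it proves Theorem \ref{Th1.2} first, via a four-step argument (radial comparison $u(x)\le\varphi(|x|)$, equivalence with global solvability of the Cauchy problem \eqref{Cauchy problem}, finite-radius blow-up analysis of that Cauchy problem, conclusion), and only afterward deduces Theorem \ref{Th1.1} from the steps just established. Presenting Theorem \ref{Th1.2} as ``a few lines once Theorem \ref{Th1.1} is established'' therefore commits you to an independent proof of Theorem \ref{Th1.1}, so your sketch of that proof is where the substance lies --- and there it has a genuine gap.

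The problem is in your nonexistence argument for $k\ge 2$. After applying Maclaurin you work with $\Delta u + n\mu|Du| \ge n(C_n^k)^{-1/k}f(u)$ and then run a one-dimensional Laplacian-type blow-up analysis. The Keller--Osserman condition controlling that reduced inequality is $\int^{+\infty}\bigl(\int_0^\tau f(t)\,{\rm d}t\bigr)^{-1/2}{\rm d}\tau$, whereas \eqref{nsc} is $\int^{+\infty}\bigl(\int_0^\tau f^k(t)\,{\rm d}t\bigr)^{-1/(k+1)}{\rm d}\tau$, and for $k\ge 2$ these are not equivalent. Concretely, for $f(t)\sim t(\log t)^{\beta}$ at infinity the first integral diverges iff $\beta\le 2$, while the second diverges iff $\beta\le \frac{k+1}{k}$; so for $\frac{k+1}{k}<\beta\le 2$ the condition \eqref{nsc} fails yet the Laplacian Keller--Osserman condition holds, and the Maclaurin reduction produces no blow-up and hence no contradiction. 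Your claim that the reduced ODE ``must blow up whenever \eqref{nsc} fails'' is therefore not justified. The paper avoids this loss of information precisely by never descending to the Laplacian: Step 1 compares the subsolution against the radial solution of the genuine $k$-Hessian Cauchy problem (so $f^k$, not $f$, drives the ODE), and Step 3 proves finite-radius blow-up directly for that ODE when \eqref{nsc} fails, with $\mu_0$ arising from the asymptotic \eqref{3.15}, namely $\lim_{r\to\infty}(C_n^k\mu r + C_{n-1}^k)(1+\mu r)^{(k-1)/(k+1)}r^{-2k/(k+1)} = C_n^k\mu^{2k/(k+1)}$, rather than from an averaging estimate on the Laplacian side.
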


Here, we obtain the existence and nonexistence results of entire subsolutions to equation \eqref{1.1} in Theorem \ref{Th1.1}. 
In Theorem \ref{Th1.2}, after adding an upper bound $\mu_0$ on $\mu$, we actually obtain a necessary and sufficient condition on the solvability for the subsolutions of equation \eqref{1.1}, which can be regarded as a generalized Keller-Osserman condition.
This boundness condition on $\mu$ comes from solving technical difficulties and plays an important role in the proof of the necessity.

\begin{remark}
	We point out that $[0, \mu_0)$ can be regarded as an interval to guarantee the Keller-Osserman condition \eqref{nsc} to be a necessary and sufficient condition of the global solvability of  inequality \eqref{subsol} when $k\ge 2$; and $(-\infty, \mu_0)$ can be regarded as an interval to guarantee the Keller-Osserman condition \eqref{nsc} to be a necessary and sufficient condition of the global solvability of inequality \eqref{subsol} when $k=1$. Namely that we have found the intervals for $\mu$ where the Keller-Osserman condition \eqref{nsc} could be a necessary and sufficient condition of the global solvability of inequality \eqref{subsol}.
\end{remark}

Note that the subsolution in Theorem \ref{Th1.1} and Theorem \ref{Th1.2} has no sign restriction. 
However, we are also interested in the existence of the positive subsolutions. 
If we extend the requirement of $f$ to the degenerate case, then the result of the positive subsolutions to equation \eqref{1.1} can be formulated as follows.

\begin{Theorem}\label{Th1.3}
	Suppose that $f\in C(\mathbb{R})$ satisfies 
	\begin{equation}\label{f0}
		\left\{
		\begin{aligned}
			&f(t) > 0 \ \text{is monotonically non-decreasing in}\ (0,+\infty),	\\
			&f(t) = 0 \ in\ (-\infty,0].
		\end{aligned}
		\right.
	\end{equation}
	For either the case $k=1$, $\mu\in(-\infty,\mu_0)$; or the case $2\leq k\leq n$, $\mu \in [0,\mu_0)$, equation \eqref{1.1} admits a positive subsolution $u\in \Phi_{\mu}^k(\mathbb{R}^n)$ if and only if condition \eqref{nsc} holds.
\end{Theorem}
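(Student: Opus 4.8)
\textbf{Proof strategy for Theorem~\ref{Th1.3}.} The plan is to deduce both implications from the constructions already behind Theorem~\ref{Th1.1} (equivalently Theorem~\ref{Th1.2}). Note first that the range of $\mu$ prescribed here — namely $(-\infty,\mu_0)$ when $k=1$ and $[0,\mu_0)$ when $k\ge2$ — is contained in the range where the existence part of Theorem~\ref{Th1.1} applies and also in the range $(-\infty,\mu_0)$ where its nonexistence part applies, so Theorem~\ref{Th1.1} is at our disposal. The only genuinely new difficulty is that \eqref{f0} forces $f\equiv0$ on $(-\infty,0]$, so $f$ is not positive on all of $\mathbb{R}$ and neither earlier theorem can be quoted verbatim; the remedy is to exploit the positivity of the (sub)solution, which confines all relevant values of $f$ to $(0,+\infty)$, where $f$ is exactly the type of nonlinearity treated before.

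\emph{Sufficiency.} Assume \eqref{nsc} holds. Since $f$ is continuous, positive and non-decreasing on $(0,+\infty)$, we have $f(t)\ge f(1)>0$ for $t\ge1$, so $\int_1^\tau f^k(t)\,dt\ge(\tau-1)f(1)^k\to+\infty$; together with $\int_0^\tau f^k(t)\,dt=\int_1^\tau f^k(t)\,dt+\int_0^1 f^k(t)\,dt$ and $\int_0^1 f^k<+\infty$, this shows \eqref{nsc} is equivalent to $\int^{+\infty}\big(\int_1^\tau f^k(t)\,dt\big)^{-1/(k+1)}\,d\tau=+\infty$. I would then revisit the proof of Theorem~\ref{Th1.1}(i): the entire subsolution produced there is radial and radially non-decreasing, $u(x)=\varphi(|x|)$, its value at the origin $\varphi(0)=\min_{\mathbb{R}^n}u$ is a free parameter of the construction, and the ODE defining $\varphi$ involves $f$ only through its restriction to $[\varphi(0),+\infty)$. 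Running that construction with $\varphi(0)=1$, using the positivity and monotonicity of $f$ on $[1,+\infty)$ and the displayed integral identity, yields $u\in\Phi_\mu^k(\mathbb{R}^n)$ with $u\ge1$ and $S_k^{\frac{1}{k}}(D^2u+\mu|Du|I)\ge f(u)$ in $\mathbb{R}^n$, i.e.\ a \emph{positive} subsolution of \eqref{1.1}.

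\emph{Necessity.} Suppose \eqref{1.1} has a positive subsolution $u\in\Phi_\mu^k(\mathbb{R}^n)$ and, for contradiction, that \eqref{nsc} fails, i.e.\ $\int^{+\infty}\big(\int_0^\tau f^k(t)\,dt\big)^{-1/(k+1)}\,d\tau<+\infty$. Fix any $x_0\in\mathbb{R}^n$ and choose $b$ with $0<b<u(x_0)$, which is possible precisely because $u(x_0)>0$. I would then reuse the nonexistence mechanism of Theorem~\ref{Th1.1}(ii): the radial problem for $S_k^{\frac{1}{k}}(D^2\psi+\mu|D\psi|I)=f(\psi)$ about $x_0$ with data $\psi(x_0)=b$, $D\psi(x_0)=0$ has a radially non-decreasing admissible solution $\psi_b$ on a maximal ball $B_{R_b}(x_0)$, and the failure of \eqref{nsc} forces $R_b<+\infty$ together with $\psi_b\to+\infty$ as $|x-x_0|\to R_b^-$; crucially this construction uses $f$ only on $[b,+\infty)\subset(0,+\infty)$, where \eqref{f0} guarantees $f$ is positive and non-decreasing, so it goes through unchanged. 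On $B_{R_b-\delta}(x_0)$ the function $\psi_b$ is finite and, for $\delta>0$ small, dominates $u$ on $\partial B_{R_b-\delta}(x_0)$, since $\psi_b\to+\infty$ there while $u$ is bounded on the compact set $\overline{B_{R_b}(x_0)}$. The comparison principle for \eqref{1.1} — available because $f(\cdot)$ is non-decreasing and the first-order term $\mu|p|$ is Lipschitz in $p$, and already used in the proof of Theorem~\ref{Th1.1}(ii) — then yields $u\le\psi_b$ on $B_{R_b-\delta}(x_0)$, hence $u(x_0)\le\psi_b(x_0)=b<u(x_0)$, a contradiction. Therefore \eqref{nsc} must hold.

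\emph{Main obstacle.} Conceptually the proof is short once Theorem~\ref{Th1.1} is granted; the real work is in making the two italicized assertions precise: that the existence construction of Theorem~\ref{Th1.1}(i) can be performed with an arbitrary prescribed positive value at the origin using only $f|_{[1,+\infty)}$, and that the radial comparison (super)solutions behind Theorem~\ref{Th1.1}(ii) depend only on $f|_{[b,+\infty)}$ with $b>0$ and develop a finite blow-up radius exactly when \eqref{nsc} fails. Both amount to tracing through the proof of Theorem~\ref{Th1.1} and verifying that the degenerate branch $f\equiv0$ on $(-\infty,0]$ is never invoked once the solution is kept positive; this is also where the bound $\mu<\mu_0$ re-enters, in precisely the same way as in Theorem~\ref{Th1.1}, to make the relevant ODE and comparison estimates work.
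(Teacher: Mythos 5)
Your proposal is correct and takes essentially the same route as the paper. The paper's actual argument for Theorem~\ref{Th1.3} is very short: it observes that taking the initial datum $a>0$ in the Cauchy problem \eqref{Cauchy problem}, together with the monotone non-decreasing property of $\varphi$ from the integral formula, forces $\varphi(r)\geq a>0$ for all $r$, hence $f(\varphi(r))\geq f(a)>0$ and $\varphi'(r)>0$, which is exactly the positivity used throughout Lemma~\ref{Lem2.2} and Steps~1--3 of the proof of Theorem~\ref{Th1.2}; it then declares ``the rest of the proof is similar to the proof of Theorem~\ref{Th1.2}.'' Your write-up unpacks exactly this — choosing $a=1$ for sufficiency, choosing $0<b<u(x_0)$ for necessity and running the comparison with the radial blow-up solution — so the two arguments coincide, with yours merely spelling out the comparison step (centered at a general $x_0$ rather than the origin, which is a cosmetic variation) and the reduction of \eqref{nsc} to the tail $\int_1^\tau f^k$.
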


Comparing with the standard Hessian equation in \cite{JB}, we make effort to deal with the gradient term $\mu |Du|I$ in this paper. We observe that the radial solution of equation \eqref{1.1} satisfies an ordinary differential equation with a divergence structure, see \eqref{integrating} for example. When $\mu<0$, different results are found in the semilinear case ($k=1$) and in the fully nonlinear case ($k\ge 2$), which is one contribution of this paper. Moreover, an upper bound $\mu_0$ of $\mu$ has been found so that $(-\infty, \mu_0)$ in the $k=1$ case and $[0, \mu_0)$ in the $k\ge 2$ case provide ranges where Keller-Osserman condition \eqref{nsc} becomes a necessary and sufficient condition towards the entire existence of inequality \eqref{subsol}. This is another contribution of this paper.

The rest of the paper is organized as follows. 
In Section \ref{Section 2}, we make some preliminary calculations of $C^2$ radial solutions and relate the $k$-Hessian type equation \eqref{1.1} to a Cauchy problem in the radial sense.
The proofs of the main theorems are given in Section \ref{Section 3}.
In Section \ref{Section 4}, we give two corollaries which are the applications of Theorems \ref{Th1.2} and \ref{Th1.3} with special forms of $f(u)$. Two examples of explicit entire positive subsolutions are presented at the end.

\vspace{3mm}

\section{Preliminaries}\label{Section 2}
In this section, we state some preliminary results of the radial functions.
We first calculate the eigenvalues of $k$-Hessian type operator in the radial sense and obtain the expression of $k$-Hessian type operator $S_k(D^2u+\mu |D u|I)$ in the radial sense in Lemma \ref{Lem2.1}.
Thus, we get an ordinary differential equation associated with equation \eqref{1.1}, which is equipped with a divergence structure. By integrating the ordinary differential equation once and imposing an initial condition, we get a Cauchy problem \eqref{Cauchy problem}.
Then in Lemma \ref{Lem2.2}, we prove the {\it a priori} properties of the solutions to the Cauchy problem.
Finally, the local existence result of the Cauchy problem is established in Lemma \ref{Lem2.3}.

Letting $r=|x|=\sqrt{\sum_{i=1}^{n}x_{i}^{2}}$, we formulate these preliminary lemmas as follows.
\begin{Lemma}\label{Lem2.1}
	Assume $\varphi(r)\in C^2[0,+\infty)$ is radially symmetric with $\varphi^{\prime}(0)=0$. Then for $u(x)=\varphi(r)$, we have that $u(x)\in C^2(\mathbb{R}^n)$ and the eigenvalues of  $\lambda(D^2u+\mu|D u|I)$ are
	\begin{equation}\label{lambda}
		\lambda(D^2u+\mu|D u|I)=
		\left\{
		\begin{aligned}
			&\left( \varphi^{\prime\prime}(r)+\mu\varphi^{\prime}(r),
			\frac{1+\mu r}{r}\varphi^{\prime}(r),
			\cdots,
			\frac{1+\mu r}{r}\varphi^{\prime}(r) \right),\quad
			r\in (0,+\infty),\\
			&\left( \varphi^{\prime\prime}(0),
			\varphi^{\prime\prime}(0),
			\cdots,
			\varphi^{\prime\prime}(0)\right),\quad r=0.
		\end{aligned}
		\right.
	\end{equation}
	Moreover,
	\begin{equation}\label{Sk}
		S_k(D^2u+\mu|D u|I)=
		\left\{
		\begin{aligned}
			&C_{n-1}^{k-1}
			(\varphi^{\prime\prime}(r)+\mu\varphi^{\prime}(r) ) 
			(\frac{1+\mu r}{r}\varphi^{\prime}(r) )^{k-1}+
			C_{n-1}^k
			(\frac{1+\mu r}{r}\varphi^{\prime}(r) )^{k},\quad
			r\in (0,+\infty),\\
			&C_n^k
			\left( \varphi^{\prime\prime}(0)\right)^k ,\quad r=0.
		\end{aligned}
		\right.
	\end{equation}
\end{Lemma}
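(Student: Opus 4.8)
The plan is to reduce everything to elementary linear algebra together with one careful limit at the origin.

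First I would differentiate $u(x)=\varphi(r)$ for $r>0$ by the chain rule: $\partial_i u=\varphi'(r)\tfrac{x_i}{r}$ and $\partial_{ij}u=\varphi''(r)\tfrac{x_ix_j}{r^2}+\tfrac{\varphi'(r)}{r}\bigl(\delta_{ij}-\tfrac{x_ix_j}{r^2}\bigr)$. Writing $P=P(x):=\tfrac{x\otimes x}{r^2}$ for the orthogonal projection onto the line $\mathbb{R}x$, this reads $D^2u=\varphi''(r)\,P+\tfrac{\varphi'(r)}{r}(I-P)$. Since $|Du|=|\varphi'(r)|$ (and $\varphi'\ge 0$ for the monotone radial profiles used later, so that $|Du|=\varphi'(r)$), adding $\mu|Du|I$ gives $D^2u+\mu|Du|I=\bigl(\varphi''(r)+\mu\varphi'(r)\bigr)P+\bigl(\tfrac{\varphi'(r)}{r}+\mu\varphi'(r)\bigr)(I-P)$. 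Because $P$ and $I-P$ are complementary orthogonal projections of ranks $1$ and $n-1$, this is already a spectral decomposition: $x/r$ is an eigenvector with eigenvalue $\varphi''(r)+\mu\varphi'(r)$, and $x^{\perp}$ is an $(n-1)$-dimensional eigenspace with eigenvalue $\tfrac{\varphi'(r)}{r}+\mu\varphi'(r)=\tfrac{1+\mu r}{r}\varphi'(r)$. This is exactly the $r\in(0,+\infty)$ branch of \eqref{lambda}.

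Next I would treat $r=0$ and the $C^2$ regularity of $u$ on all of $\mathbb{R}^n$, where the hypothesis $\varphi'(0)=0$ is essential. From $\varphi'(0)=0$ and differentiability of $\varphi'$ at $0$ one gets $\tfrac{\varphi'(r)}{r}\to\varphi''(0)$ as $r\to 0^+$; together with $\varphi''(r)\to\varphi''(0)$ this yields $\varphi''(r)-\tfrac{\varphi'(r)}{r}\to 0$, whence $\partial_{ij}u(x)=\tfrac{\varphi'(r)}{r}\delta_{ij}+\bigl(\varphi''(r)-\tfrac{\varphi'(r)}{r}\bigr)\tfrac{x_ix_j}{r^2}\to\varphi''(0)\delta_{ij}$ as $x\to 0$, uniformly in the direction $x/r$. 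A short $\varepsilon$–$\delta$ argument (again using $\varphi'(0)=0$ to control the difference quotients of $\partial_i u$ at the origin) upgrades this to $u\in C^2(\mathbb{R}^n)$ with $D^2u(0)=\varphi''(0)I$; since $Du(0)=0$, we obtain $\lambda(D^2u+\mu|Du|I)(0)=(\varphi''(0),\dots,\varphi''(0))$, the $r=0$ branch of \eqref{lambda}. I expect this verification of genuine twice-differentiability at the origin — rather than mere continuity of the formal Hessian — to be the one slightly delicate point; everything else is bookkeeping.

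Finally, formula \eqref{Sk} follows by substituting the eigenvalues into $S_k(\lambda)=\sum_{i_1<\dots<i_k}\lambda_{i_1}\cdots\lambda_{i_k}$. For $r>0$, set $a=\varphi''(r)+\mu\varphi'(r)$ (multiplicity $1$) and $b=\tfrac{1+\mu r}{r}\varphi'(r)$ (multiplicity $n-1$): a $k$-element index set either contains the distinguished index, contributing $ab^{k-1}$ and occurring $C_{n-1}^{k-1}$ times, or it does not, contributing $b^{k}$ and occurring $C_{n-1}^{k}$ times; summing gives $C_{n-1}^{k-1}(\varphi''+\mu\varphi')\bigl(\tfrac{1+\mu r}{r}\varphi'\bigr)^{k-1}+C_{n-1}^{k}\bigl(\tfrac{1+\mu r}{r}\varphi'\bigr)^{k}$. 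At $r=0$ all $n$ eigenvalues equal $\varphi''(0)$, so $S_k=C_n^{k}(\varphi''(0))^{k}$. This last step is a pure binomial-coefficient identity and presents no difficulty.
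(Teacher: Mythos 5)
Your proof is correct and follows essentially the same route as the paper: compute the radial Hessian, recognize $D^2u+\mu|Du|I$ as a linear combination of the complementary orthogonal projections onto $\mathbb{R}x$ and $x^{\perp}$ (the paper writes it as $a\,x^Tx+bI$, which is the same decomposition), read off the two eigenvalues with multiplicities $1$ and $n-1$, take the limit $r\to 0$ using $\varphi'(0)=0$, and then substitute into $S_k$ by counting which $k$-subsets contain the distinguished index. You are also right to flag that both arguments tacitly write $|Du|=\varphi'(r)$ rather than $|\varphi'(r)|$, so $\varphi'\ge 0$ is implicitly assumed for \eqref{lambda} as stated; the paper passes over this silently, whereas you make it explicit.
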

\begin{proof}
	It is easy to see that for $x\neq 0$, $i,j=1,\cdots,n$, 
	\begin{equation}\label{d1}
		\frac{\partial u}{\partial x_i}(x)=\varphi^{\prime}(r) \frac{x_i}{r},
	\end{equation}
	and 
	\begin{equation}\label{d2}
		\frac{\partial ^2u}{\partial x_i \partial x_j}(x)
		=\left(  \frac{\varphi^{\prime\prime}(r)}{r^2} 
		-\frac{\varphi^{\prime}(r)}{r^3}     \right)x_i x_j  
		+\frac{\varphi^{\prime}(r)}{r} \delta_{ij}.
	\end{equation}
	Using \eqref{d1} and $\varphi^{\prime}(0)=0$, we have
	\begin{equation*}
		\lim_{x\to 0}\frac{\partial u}{\partial x_i}(x)=\lim_{x\to 0} \frac{\varphi^{\prime}(r)-\varphi^{\prime}(0)}{r-0}x_i=\varphi^{\prime\prime}(0)\cdot0=0.
	\end{equation*} 
	Similarly, by \eqref{d2}, we have
	\begin{equation}\label{d20}
		\lim_{x\to 0}\frac{\partial ^2u}{\partial x_i \partial x_j}(x)
		=\lim_{x\to 0} \left( \left(  \frac{\varphi^{\prime\prime}(r)}{r^2} 
		-\frac{\varphi^{\prime}(r)}{r^3}     \right)x_i x_j  
		+\frac{\varphi^{\prime}(r)}{r} \delta_{ij}\right) =\varphi^{\prime\prime}(0)\delta_{ij}.
	\end{equation} 
	Then $u(x)\in C^2(\mathbb{R}^n)$ is obvious by defining
	\begin{equation*}
		\frac{\partial u}{\partial x_i}(0)=0,\quad {\rm and} \ \ \frac{\partial ^2u}{\partial x_i \partial x_j}(0)=\varphi^{\prime\prime}(0)\delta_{ij}.
	\end{equation*}
	Combining \eqref{d2}, \eqref{d20} and
	\begin{equation*}
		|D u|= \left[ \sum_{i=1}^{n} \left( \frac{\partial u}{\partial x_i}\right) ^2 \right] ^\frac{1}{2}= \varphi^{\prime}(r)\left[ \sum_{i=1}^{n} \left( \frac{x_i}{r} \right) ^2\right]^\frac{1}{2} =\varphi^{\prime}(r),\quad r\geq0,
	\end{equation*} 
	then $D^2u+\mu|D u|I$ can be denoted as 
	\begin{equation}\label{ab}
		D^2u+\mu|D u|I=ax^Tx+bI,
	\end{equation}
	where
	\begin{equation*}
		a=
		\left\{
		\begin{aligned}
			& \frac{\varphi^{\prime\prime}(r)}{r^2} -\frac{\varphi^{\prime}(r)}{r^3}, & r\in (0,+\infty),\\
			&0, & r=0, \quad \quad\quad
		\end{aligned}
		\right.
	\end{equation*} 
	\begin{equation*}
		b=
		\left\{
		\begin{aligned}
			& \frac{\varphi^{\prime}(r)}{r} +\mu\varphi^{\prime}(r), 
			\quad r\in (0,+\infty),\\
			&\varphi^{\prime\prime}(0)+\mu\varphi^{\prime}(r) ,\quad  r=0.
		\end{aligned}
		\right.
	\end{equation*}
	Since the eigenvalues of matrix $x^Tx$ are $\lambda (x^Tx)=(|x|^2,0,\cdots,0)$, and so the eigenvalues of $ax^Tx+bI$ in \eqref{ab} are $(a|x|^2+b,b,\cdots,b)$. Hence \eqref{lambda} is proved. The proof of \eqref{Sk} is obtained by the definition of $S_k$.
\end{proof}

From Lemma \ref{Lem2.1}, we know that $u(x) = \varphi(r)\in C^2$ is a
radial solution of equation \eqref{1.1} if and only if $\varphi(r)$
satisfies the ordinary differential equation
\begin{equation}\label{ODE}
	C_{n-1}^{k-1}
	\left(\varphi^{\prime\prime}(r)+\mu\varphi^{\prime}(r) \right) 
	\left(\frac{1+\mu r}{r}\varphi^{\prime}(r) \right)^{k-1}+
	C_{n-1}^k
	\left(\frac{1+\mu r}{r}\varphi^{\prime}(r) \right)^{k}=f^k(\varphi(r))
\end{equation}
for $r\in(0,+\infty)$. By a simple calculation of \eqref{ODE}, we have
\begin{equation}\label{2.8}
	\begin{split}
		k\varphi^{\prime\prime}(r)\left( \varphi^{\prime}(r)\right) ^{k-1}+\left( n\mu+\frac{n-k}{r}\right) \left( \varphi^{\prime}(r)\right) ^{k}=\frac{k}{C_{n-1}^{k-1}}
		\left(\frac{r}{1+\mu r} \right)^{k-1}f^k(\varphi(r)).
	\end{split}
\end{equation}
Constructing a function $\chi(r)$ such that $\chi^{\prime}(r)=n\mu+\frac{n-k}{r}$, then we get $\chi(r)=n\mu r+(n-k)\ln r$.
Multiplying $e^{\chi(r)}$ on both sides of \eqref{2.8}, we obtain
\begin{equation}\label{integrating}
	\begin{split}
		\left[ \left( \varphi^{\prime}(r)\right) ^{k}e^{\chi(r)}\right] ^{\prime}
		&=k\varphi^{\prime\prime}(r)\left( \varphi^{\prime}(r)\right) ^{k-1}e^{\chi(r)}+\left( n\mu+\frac{n-k}{r}\right) \left( \varphi^{\prime}(r)\right) ^{k} e^{\chi(r)}\\
		&=\frac{k}{C_{n-1}^{k-1}}e^{\chi(r)}
		\left(\frac{r}{1+\mu r} \right)^{k-1}f^k(\varphi(r)).
	\end{split}
\end{equation}
Integrating \eqref{integrating} from $0$ to $r$ gives
\begin{equation*}
	\left( \varphi^{\prime}(r)\right) ^{k}e^{\chi(r)}=\int_{0}^{r} \frac{k}{C_{n-1}^{k-1}}e^{\chi(s)}
	\left(\frac{s}{1+\mu s} \right)^{k-1}f^k(\varphi(s)) \,{\rm d}s
\end{equation*}
for $r>0$, where $\varphi^{\prime}(0)=0$ is used.  Then we get the following expression of the derivative of the radial solution:
\begin{equation*}
	\varphi^{\prime}(r)= \left[ e^{-\chi(r)}
	\int_{0}^{r} \frac{k}{C_{n-1}^{k-1}}e^{\chi(s)}
	\left(\frac{s}{1+\mu s} \right)^{k-1}f^k(\varphi(s)) \,{\rm d}s\right] ^{\frac{1}{k}},
\end{equation*}
where $\chi(r):=n\mu r+(n-k)\ln r$.
For any given initial value $a$, we are going to consider the related Cauchy problem:
\begin{equation}\label{Cauchy problem}
	\left\{
	\begin{aligned}
		&\varphi^{\prime}(r)=
		\left(
		\frac{r^{k-n}}{e^{n\mu r } }
		\int_{0}^{r} \frac{k}{C_{n-1}^{k-1}}
		\frac{e^{n\mu s}s^{n-1}}{(1+\mu s)^{k-1}} 
		f^k(\varphi(s)) \,{\rm d}s
		\right)^{\frac{1}{k}},\quad r>0,\\
		&\varphi(0)=a.
	\end{aligned}
	\right.
\end{equation}

\begin{Lemma}\label{Lem2.2}
	Let $f(t)>0$ be a continuous and monotone non-decreasing function defined on $\mathbb{R}$. 
	For any constant $a$, we assume that there exists $\varphi(r) \in C^0[0,+\infty)\cap C^1(0,+\infty)$ satisfying the Cauchy problem \eqref{Cauchy problem}.
	Then $\varphi(r)\in C^2[0,+\infty)$, and it satisfies equation \eqref{ODE} with $\varphi^{\prime}(0)=0$. 
	Furthermore, for either the case $k=1$, $\mu\in(-\infty,+\infty)$; or the case $2\leq k\leq n$, $\mu \in [0,+\infty)$, we have
	\begin{equation}\label{k-cone}
		\lambda_r:=\left( \varphi^{\prime\prime}(r)+\mu\varphi^{\prime}(r),
		\frac{1+\mu r}{r}\varphi^{\prime}(r),
		\cdots,
		\frac{1+\mu r}{r}\varphi^{\prime}(r) \right)\in \Gamma_{k}, \quad r\in (0,+\infty).
	\end{equation}
\end{Lemma}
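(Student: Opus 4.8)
My plan is to argue in three stages: (a) positivity of $\varphi'$ together with interior $C^2$-regularity, which allows passing from the Cauchy problem \eqref{Cauchy problem} back to the differential equation \eqref{ODE}; (b) the analysis at the origin, yielding $\varphi\in C^2[0,+\infty)$ with $\varphi'(0)=0$; (c) the verification of the cone condition \eqref{k-cone}. For (a): since $f>0$ and $f\circ\varphi$ is continuous, the integrand on the right of \eqref{Cauchy problem} is strictly positive on $(0,+\infty)$, so $\varphi'(r)>0$ for all $r>0$, and the expression under the $k$-th root there is a strictly positive $C^1$ function of $r$ on $(0,+\infty)$ (differentiate the product of $r^{k-n}e^{-n\mu r}$ with the integral). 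Hence $\varphi'\in C^1(0,+\infty)$, i.e.\ $\varphi\in C^2(0,+\infty)$. Rewriting \eqref{Cauchy problem} in its equivalent integrated form $(\varphi'(r))^k e^{\chi(r)}=\int_0^r\frac{k}{C_{n-1}^{k-1}}e^{\chi(s)}(\frac{s}{1+\mu s})^{k-1}f^k(\varphi(s))\,{\rm d}s$ with $\chi(r)=n\mu r+(n-k)\ln r$ and differentiating recovers \eqref{integrating}, hence \eqref{2.8}, hence \eqref{ODE} on $(0,+\infty)$.

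For (b): near $s=0$ the integrand in \eqref{Cauchy problem} equals $\frac{k}{C_{n-1}^{k-1}}s^{n-1}f^k(a)(1+o(1))=\frac{n}{C_n^k}s^{n-1}f^k(a)(1+o(1))$ (using $C_{n-1}^{k-1}=\frac{k}{n}C_n^k$ and $e^{n\mu s}\to1$, $(1+\mu s)^{k-1}\to1$, $f^k(\varphi(s))\to f^k(a)$ as $s\to0^+$), so the integral is $\frac{f^k(a)}{C_n^k}r^n(1+o(1))$ and therefore $\varphi'(r)=(C_n^k)^{-1/k}f(a)\,r\,(1+o(1))$ as $r\to0^+$. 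In particular $\varphi'(r)\to0$, and since $\varphi\in C^0[0,+\infty)$ this forces $\varphi$ to be differentiable at $0$ with $\varphi'(0)=0$ and $\varphi'\in C^0[0,+\infty)$. Substituting these asymptotics of $\varphi'$ into \eqref{2.8} and solving for $\varphi''$ shows $\varphi''(r)\to(C_n^k)^{-1/k}f(a)$ as $r\to0^+$, which agrees with the right derivative of $\varphi'$ at $0$; hence $\varphi\in C^2[0,+\infty)$ and $\varphi$ satisfies \eqref{ODE} with $\varphi'(0)=0$.

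For (c): by Lemma \ref{Lem2.1}, for $r>0$ the vector $\lambda_r$ in \eqref{k-cone} has the form $(\alpha,\beta,\dots,\beta)$ with $\alpha=\varphi''(r)+\mu\varphi'(r)$ and $\beta=\frac{1+\mu r}{r}\varphi'(r)$, and a direct count gives $S_p(\lambda_r)=\beta^{p-1}(C_{n-1}^p\beta+C_{n-1}^{p-1}\alpha)$ for $1\le p\le k$. If $k=1$, then $S_1(\lambda_r)=\alpha+(n-1)\beta=f(\varphi(r))>0$ directly from \eqref{ODE}, so $\lambda_r\in\Gamma_1$ for every $\mu\in(-\infty,+\infty)$. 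If $2\le k\le n$ and $\mu\ge0$, then $\varphi'(r)>0$ and $1+\mu r>0$ force $\beta>0$, so $S_p(\lambda_r)>0\iff\alpha+\frac{n-p}{p}\beta>0$; since $\frac{n-p}{p}$ decreases in $p$ and $\beta>0$, it suffices to check $p=k$. Dividing \eqref{2.8} by $k(\varphi'(r))^{k-1}>0$ yields precisely $\alpha+\frac{n-k}{k}\beta=\varphi''(r)+\frac{n\mu}{k}\varphi'(r)+\frac{n-k}{kr}\varphi'(r)=\frac{1}{C_{n-1}^{k-1}}(\frac{r}{1+\mu r})^{k-1}\frac{f^k(\varphi(r))}{(\varphi'(r))^{k-1}}>0$, whence $\lambda_r\in\Gamma_k$ for $r\in(0,+\infty)$ in all the stated cases.

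I expect the genuinely delicate point to be the $C^2$-regularity at the origin in step (b): one must resolve the $0/0$ indeterminacy produced by the factor $r^{k-n}$ against the $O(r^n)$ vanishing of the integral in \eqref{Cauchy problem}, and then extract the value of $\varphi''(0)$ from \eqref{2.8} even though the coefficient $(n-k)/r$ is singular and the factor $(\varphi'(r))^{k-1}$ degenerates when $k\ge2$, with the bookkeeping of constants matching the value $(C_n^k)^{-1/k}f(a)$ predicted by \eqref{Sk} at $r=0$. Once positivity of $\varphi'$ and the equation \eqref{ODE} are available, step (c) is purely algebraic.
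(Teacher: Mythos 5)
Your proposal is correct and follows essentially the same route as the paper's proof: establish $C^2$-regularity across $r=0$ by matching the limit of $\varphi''$ from the ODE with the right derivative of $\varphi'$ at the origin, then verify $\lambda_r\in\Gamma_k$ by reducing $S_p>0$ to the single inequality for $p=k$ which comes directly from \eqref{2.8}. The only difference is cosmetic: you read off the leading-order asymptotics $\varphi'(r)\sim (C_n^k)^{-1/k}f(a)\,r$ directly from the behavior of the integral near the origin, whereas the paper reaches the same numbers by repeated L'H\^{o}pital. Your cone-verification in step (c), computing $S_p(\lambda_r)=\beta^{p-1}\bigl(C_{n-1}^{p}\beta+C_{n-1}^{p-1}\alpha\bigr)$ and using that $\tfrac{n-p}{p}$ decreases in $p$, is a slightly tighter packaging of the same monotonicity the paper uses when bounding $\varphi''+\tfrac{n-p+n\mu r}{pr}\varphi'$ below by the $p=k$ expression; both rest on $\beta>0$ and on dividing \eqref{2.8} by $k(\varphi')^{k-1}>0$.
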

\begin{proof}
	We first prove $\varphi(r)\in C^2[0,+\infty)$. It is easy to see that $\varphi(r)\in C^2(0,+\infty)$.
	By using the L'Hospital rule,  we have 
	\begin{equation*}
		\begin{split}
			\lim_{r\to 0}\varphi^{\prime}(r)
			&=\lim_{r\to 0} \left(
			\frac{r^{k-n}}{e^{n\mu r } }
			\int_{0}^{r} \frac{k}{C_{n-1}^{k-1}}
			\frac{e^{n\mu s}s^{n-1}}{(1+\mu s)^{k-1}} 
			f^k(\varphi(s)) \,{\rm d}s
			\right)^{\frac{1}{k}}\\
			&=\left(  \frac{k}{C_{n-1}^{k-1}}	\right) ^{\frac{1}{k}}
			\left(  \lim_{r\to 0}
			\frac{\int_{0}^{r} e^{n\mu s}
				\frac{s^{n-1}}{(1+\mu s)^{k-1}} 
				f^k(\varphi(s)) \,{\rm d}s}
			{e^{n\mu r}r^{n-k}} \right) ^{\frac{1}{k}}\\
			&=\left(  \frac{k}{C_{n-1}^{k-1}}	\right) ^{\frac{1}{k}}
			\left(  \lim_{r\to 0}	
			\frac{e^{n\mu r} 
				\frac{r^{n-1}}{(1+\mu r)^{k-1}}
				f^k(\varphi(r))}
			{n\mu e^{n\mu r }r^{n-k}+e^{n\mu r}(n-k)r^{n-k-1}} 
			\right) ^{\frac{1}{k}}\\
			&=\left(  \frac{k}{C_{n-1}^{k-1}}	\right) ^{\frac{1}{k}}
			\left( \lim_{r\to 0}  	
			\frac{f^k(\varphi(r))}
			{\left( n\mu +(n-k)\frac{1}{r}\right)
				\left( \frac{1}{r}+\mu\right) ^{k-1}} 
			\right) ^{\frac{1}{k}}
			=0=\varphi^{\prime}(0).
		\end{split}
	\end{equation*}
	Then $\varphi(r)\in C^1[0,+\infty)$.
	Differentiating \eqref{Cauchy problem}  with respect to $r$, we have for $r>0$,
	\begin{equation}\label{f2}
		\begin{split}
			\varphi^{\prime\prime}(r) 
			=&\frac{1}{k} \left( \frac{k}{C_{n-1}^{k-1}}   \right) ^{\frac{1}{k}} 
			\left[ 
			\left((k-n)\frac{1}{r}- n\mu   \right) 
			\left(\frac{r^{k-n}}{e^{n\mu r}}
			\int_{0}^{r}
			\frac{e^{n\mu s}s^{n-1}}{(1+\mu s)^{k-1}}
			f^k(\varphi(s)) \,{\rm d}s \right) ^{\frac{1}{k}}  
			\right.\\
			&\left. 
			+\left( \frac{r}{1+\mu r} \right) ^{k-1}
			f^k(\varphi(r))
			\left(\frac{r^{k-n}}{e^{n\mu r}}
			\int_{0}^{r}
			\frac{e^{n\mu s}s^{n-1}}{(1+\mu s)^{k-1}}
			f^k(\varphi(s)) \,{\rm d}s \right) ^{\frac{1}{k}-1} 
			\right].
		\end{split}	
	\end{equation}
	On the one hand, we take a limit of \eqref{f2} at $0$ to have 
	\begin{equation*}
		\begin{split}
			&\lim_{r\to 0} \varphi^{\prime\prime}(r)\\
			=&\lim_{r\to 0}  
			\frac{1}{k} \left( \frac{k}{C_{n-1}^{k-1}}   \right) ^{\frac{1}{k}} 
			\left[ 
			\left((k-n)\frac{1}{r}- n\mu   \right) 
			\left(\frac{r^{k-n}}{e^{n\mu r}}
			\int_{0}^{r}
			e^{n\mu s} \frac{s^{n-1}}{(1+\mu s)^{k-1}}
			f^k(\varphi(s)) \,{\rm d}s \right) ^{\frac{1}{k}}  
			\right.\\
			&\left. 
			+\left( \frac{r}{1+\mu r} \right) ^{k-1}
			f^k(\varphi(r))
			\left(\frac{r^{k-n}}{e^{n\mu r}}
			\int_{0}^{r}
			e^{n\mu s} \frac{s^{n-1}}{(1+\mu s)^{k-1}}
			f^k(\varphi(s)) \,{\rm d}s \right) ^{\frac{1}{k}-1} 
			\right]\\
			=&\frac{1}{k} \left( \frac{k}{C_{n-1}^{k-1}}   \right) ^{\frac{1}{k}} 
			\left[ 
			(k-n) \left( \lim_{r\to 0}  \frac{
				\frac{r^{n-1}}{(1+\mu r)^{k-1}}
				f^k(\varphi(r))    }{ nr^{n-1}+n \mu r^n}  \right) ^{\frac{1}{k}} 
			-n\mu \left(  \lim_{r\to 0}  \frac{
				\frac{r^{n-1}}{(1+\mu r)^{k-1}}
				f^k(\varphi(r))    }{(n-k)r^{n-k-1}+n \mu r^{n-k}}  \right) ^{\frac{1}{k}}  
			\right.\\
			&\left.
			+   f^k(\varphi(0)) \left(\lim_{r\to 0}  \frac{
				\frac{r^{n-1}}{(1+\mu r)^{k-1}}
				f^k(\varphi(r))    }
			{ \frac{k}{(1+\mu r)^2}  \frac{r^{n-1}}{(1+\mu r)^{k-1}}  +
				\frac{r^{n}}{(1+\mu r)^{k}} \left( (n-k)\frac{1}{r}+n\mu\right)  
			}  \right) ^{\frac{1}{k}-1}  
			\right] \\
			=&\frac{1}{k} \left( \frac{k}{C_{n-1}^{k-1}}   \right) ^{\frac{1}{k}} 
			\left[ 
			(k-n) \frac{f(\varphi(0))}{n^{\frac{1}{k}}}
			+   \frac{f(\varphi(0))}{n^{\frac{1}{k}-1}}\right] 
			=\frac{f(\varphi(0))}{(C_n^k)^{\frac{1}{k}}}.
		\end{split}
	\end{equation*}
	On the other hand, by the definition of the second derivative, we have 
	\begin{equation*}
		\begin{split}
			\varphi^{\prime\prime}(0)
			&=	\lim_{r\to 0}
			\frac{\left(
				\frac{r^{k-n}}{e^{n\mu r } }
				\int_{0}^{r} \frac{k}{C_{n-1}^{k-1}}e^{n\mu s}
				\frac{s^{n-1}}{(1+\mu s)^{k-1}} 
				f^k(\varphi(s)) \,{\rm d}s
				\right)^{\frac{1}{k}}-\varphi^{\prime}(0)}
			{r-0}\\
			&=\left( \frac{k}{C_{n-1}^{k-1}}   \right) ^{\frac{1}{k}} 
			\left( \lim_{r\to 0}
			\frac{\int_{0}^{r} e^{n\mu s}
				\frac{s^{n-1}}{(1+\mu s)^{k-1}} 
				f^k(\varphi(s)) \,{\rm d}s}
			{e^{n\mu r}r^{n}} \right) ^{\frac{1}{k}} \\
			&=\left( \frac{k}{C_{n-1}^{k-1}}   \right) ^{\frac{1}{k}} 
			\left( \lim_{r\to 0}
			\frac{f^k(\varphi(r))}
			{(n+n\mu r)(1+\mu r)^{k-1}}   
			\right)^\frac{1}{k} \\
			&=\left( \frac{k}{C_{n-1}^{k-1}}   \right) ^{\frac{1}{k}}  
			\frac{f(\varphi(0))}{n^{\frac{1}{k}}}
			=\frac{f(\varphi(0))}{(C_n^k)^{\frac{1}{k}}}.
		\end{split}
	\end{equation*}
	Thus $\lim_{r\to 0} \varphi^{\prime\prime}(r)=\varphi^{\prime\prime}(0)$, which shows that $\varphi(r)\in C^2[0,+\infty)$.
	By \eqref{Cauchy problem} and \eqref{f2}, it is easy to verify that $\varphi(r)$ satisfies equation \eqref{ODE}. 
	
	For $k=1$, it follows from \eqref{Sk} and \eqref{ODE} that
	\begin{equation*}
		S_1(D^2u+\mu|D u|I)=\varphi^{\prime\prime}(r)+(n\mu +\frac{n-1}{r})\varphi^{\prime}(r)=f(\varphi(r)).
	\end{equation*}
	Since $f$ is positive, we have $S_1(D^2u+\mu|D u|I)>0$, which implies that \eqref{k-cone} holds for $\mu\in (-\infty,+\infty)$.
	
	For $2\leq k\leq n$, 
	since $f>0$ and $\mu\geq 0$, it follows from \eqref{Cauchy problem} that $\varphi^{\prime}(r)>0$ for $r>0$, therefore $\frac{1+\mu r}{r}\varphi^{\prime}(r)>0$  for $r>0$.
	On the one hand, by \eqref{Sk} and \eqref{ODE}, we have 
	\begin{equation}\label{first line inequality}
		\begin{split}
			S_k(D^2u+\mu|D u|I)
			&=C_{n-1}^{k-1}
			\left( \varphi^{\prime\prime}(r)+\mu\varphi^{\prime}(r) \right)  
			\left( \frac{1+\mu r}{r}\varphi^{\prime}(r)\right) ^{k-1}+
			C_{n-1}^k
			\left( \frac{1+\mu r}{r}\varphi^{\prime}(r)\right) ^{k}\\
			&=f^k(\varphi(r)) >0.
		\end{split}
	\end{equation}
	On the other hand, it follows from \eqref{first line inequality} that 
	\begin{equation*}
		\begin{split}
			S_k(D^2u+\mu|D u|I)
			&=C_{n-1}^{k-1}
			\left( \frac{1+\mu r}{r}\varphi^{\prime}(r) \right) ^{k-1}
			\left[ \varphi^{\prime\prime}(r)+ \mu\varphi^{\prime}(r)
			+\frac{n-k}{k}\left( \frac{1+\mu r}{r}\varphi^{\prime}(r)\right) \right]\\
			&=C_{n-1}^{k-1}
			\left( \frac{1+\mu r}{r}\varphi^{\prime}(r) \right) ^{k-1}
			\left[ \varphi^{\prime\prime}(r)+\frac{n-k+n\mu r}{kr}\varphi^{\prime}(r)\right].
		\end{split}
	\end{equation*}
	To sum up, they show that 
	\begin{equation*}
		\varphi^{\prime\prime}(r)+\frac{n-k+n\mu r}{kr}\varphi^{\prime}(r)>0.
	\end{equation*}
	Then for $2\leq p \leq k$, we obtain
	\begin{equation*}
		\begin{split}
			S_p(D^2u+\mu|D u|I)
			&=C_{n-1}^{p-1}
			\left( \varphi^{\prime\prime}(r)+\mu\varphi^{\prime}(r) \right)  
			\left( \frac{1+\mu r}{r}\varphi^{\prime}(r)  \right) ^{p-1}+
			C_{n-1}^p
			\left( \frac{1+\mu r}{r}\varphi^{\prime}(r) \right) ^{p}\\
			&=C_{n-1}^{p-1}
			\left( \frac{1+\mu r}{r}\varphi^{\prime}(r) \right) ^{p-1}
			\left[ \varphi^{\prime\prime}(r)+\frac{n-p+n\mu r}{pr}\varphi^{\prime}(r)\right] \\
			&\geq C_{n-1}^{p-1}
			\left( \frac{1+\mu r}{r}\varphi^{\prime}(r) \right) ^{p-1}
			\left[ \varphi^{\prime\prime}(r)+\frac{n-k+n\mu r}{kr}\varphi^{\prime}(r)\right] >0.
		\end{split}
	\end{equation*}
	Hence \eqref{k-cone} is proved.
\end{proof}
\begin{remark}\label{re2.1}
	Note that for either the case $k=1$, $\mu\in(-\infty,+\infty)$; or the case $2\leq k\leq n$, $\mu \in [0,+\infty)$, if $f>0$, we have $\varphi^{\prime}(r)>0$ for $r>0$. 
	Moreover, for $2\leq k\leq n$,  $\mu<0$, we see that $\frac{1+\mu r}{r}\varphi^{\prime}(r) <0$ when $r>-\frac{1}{\mu}$. Then \eqref{k-cone} fails in this case, namely that we can not find an admissible solution defined on the whole space to the Cauchy problem \eqref{Cauchy problem}.
\end{remark}

Finally we shall use a method of Euler's break line to prove the local existence of the Cauchy problem \eqref{Cauchy problem} on some interval $[0, R)$, which is widely used in studying the existence theorem of classic ordinary differential equation and numerical methods for partial differential equations. The proof here is similar to Lemma 2.3 in \cite{JB}.
	\begin{Lemma}\label{Lem2.3}(Local existence)
	Let $f(t)>0$ be a continuous and monotone non-decreasing function defined on $\mathbb{R}$. 
	Then for any constant $a$, for either the case $k=1$, $\mu\in(-\infty,+\infty)$; or the case $2\leq k\leq n$, $\mu \in [0,+\infty)$, there exists a positive number $R$ such that the Cauchy problem \eqref{Cauchy problem} has a solution $\varphi \in C^2[0, R)$.
	\end{Lemma}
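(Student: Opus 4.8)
The plan is to argue by compactness rather than by a contraction scheme, since the right‑hand side of \eqref{Cauchy problem} depends on the whole history of $\varphi$ on $[0,r]$ and only continuous (not Lipschitz) dependence on $f$ is available; concretely I would follow Peano's existence scheme, realised through the Euler polygonal arcs announced above. Fix the initial value $a$ and a number $M>0$, and put $F_M:=f^k(a+M)>0$, which — since $f$ is positive and non-decreasing — is an upper bound for $f^k(\varphi(s))$ whenever $\varphi(s)\in[a,a+M]$. Observe first that in every case covered by the lemma the quantity under the $k$-th root in \eqref{Cauchy problem} is nonnegative: for $k=1$ the factor $(1+\mu s)^{k-1}$ is identically $1$, while for $k\ge2$ one has $\mu\ge0$, hence $1+\mu s\ge1$ and $e^{n\mu s}s^{n-1}\ge0$, so the inner integral is $\ge0$ and $r^{k-n}e^{-n\mu r}>0$. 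This is precisely where the sign hypotheses on $\mu$ are used.

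The first analytic step is an a priori slope bound. For any $R>0$ and any continuous $\varphi$ on $[0,R]$ with values in $[a,a+M]$, using $(1+\mu s)^{-(k-1)}\le1$, $e^{n\mu s}\le e^{n|\mu|R}=:E$, $e^{-n\mu t}\le E$ and $\int_0^r s^{n-1}\,{\rm d}s=r^n/n$, one gets
\begin{equation*}
0\le\left(\frac{r^{k-n}}{e^{n\mu r}}\int_0^r\frac{k}{C_{n-1}^{k-1}}\frac{e^{n\mu s}s^{n-1}}{(1+\mu s)^{k-1}}f^k(\varphi(s))\,{\rm d}s\right)^{\frac1k}\le L\,r,\qquad r\in[0,R],
\end{equation*}
with $L:=\big(kF_ME^2/(nC_{n-1}^{k-1})\big)^{1/k}$. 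Since $L=L(R)$ stays bounded as $R\to0$, I would then fix $R>0$ small enough that $LR^2/2\le M$.

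Next, construct the Euler broken lines: partition $[0,R]$ by $r_j=jR/N$, set $\varphi_N(0)=a$, and on each $[r_j,r_{j+1}]$ let $\varphi_N$ be affine with slope $m_0:=0$ and, for $j\ge1$, $m_j$ equal to the right‑hand side of \eqref{Cauchy problem} evaluated at $r=r_j$ with $\varphi$ replaced by the already defined $\varphi_N|_{[0,r_j]}$. By induction on $j$ — where membership of $\varphi_N$ in $[a,a+M]$ on $[0,r_j]$ legitimises the use of $F_M$ in estimating $m_j$ — one checks that $\varphi_N$ is non-decreasing, stays in $[a,a+M]$, and is $LR$-Lipschitz on $[0,R]$; equivalently $\varphi_N(r)=a+\int_0^r\sigma_N(t)\,{\rm d}t$ for a step function $\sigma_N$ with $0\le\sigma_N(t)\le Lt$. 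Hence $\{\varphi_N\}$ is uniformly bounded and equicontinuous, and Arzel\`{a}--Ascoli yields a subsequence converging uniformly on $[0,R]$ to some $\varphi\in C[0,R]$ with $\varphi(0)=a$ and values in $[a,a+M]$.

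Finally I would pass to the limit. Since $f$ is uniformly continuous on $[a,a+M]$, $f^k(\varphi_N(\cdot))\to f^k(\varphi(\cdot))$ uniformly; together with $r_j\to t$ for the subinterval containing $t$ and the continuity of $x\mapsto x^{1/k}$, this gives $\sigma_N(t)\to\big(\tfrac{t^{k-n}}{e^{n\mu t}}\int_0^t\tfrac{k}{C_{n-1}^{k-1}}\tfrac{e^{n\mu s}s^{n-1}}{(1+\mu s)^{k-1}}f^k(\varphi(s))\,{\rm d}s\big)^{1/k}$ for each $t\in(0,R)$, with the uniform bound $\sigma_N(t)\le Lt$. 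Dominated convergence then shows that $\varphi$ satisfies the integral equation obtained by integrating \eqref{Cauchy problem} over $[0,r]$; in particular $\varphi\in C^1[0,R)$ with $\varphi'(0)=0$, and $\varphi$ solves the Cauchy problem on $[0,R)$. Arguing exactly as in the proof of Lemma \ref{Lem2.2}, whose L'Hospital computations are local near each point of $[0,R)$, one upgrades this to $\varphi\in C^2[0,R)$. The two points I expect to need the most care are: (a) closing the induction that keeps the broken lines inside $[a,a+M]$ while simultaneously relying on that membership to justify the slope estimate; and (b) the origin, where the singular weight $r^{k-n}$ forces one to work with the sharp bound $\sigma_N(t)\le Lt$ rather than a crude constant, in order to get equi-Lipschitz control and hence $C^1$, then $C^2$, regularity up to $r=0$.
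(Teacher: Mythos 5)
Your proposal is the same Euler polygonal / Peano compactness argument that the paper uses: construct broken-line approximants with the a priori slope bound $\sigma_N(t)\le Lt$, invoke Arzel\`{a}--Ascoli, pass to the limit in the integrated form of the equation, and appeal to the $C^2$ upgrade already established in Lemma \ref{Lem2.2}. The only packaging difference is that you pass to the limit directly (pointwise convergence of the slope step-functions together with the domination $\sigma_N(t)\le Lt$), whereas the paper first proves the broken lines are $\varepsilon$-approximate solutions and then integrates; the two are equivalent realisations of the same scheme.
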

	\begin{proof}
	The proof is divided into three steps. We first define a Euler's break line and then show that this Euler's break line is an $\varepsilon$-approximation solution of \eqref{Cauchy problem}. In the third step, we find a solution of \eqref{Cauchy problem} by this Euler's break line.

	\textit{Step 1: Definition of Euler's break line $\psi$}.
	We define a functional $F(\cdot, \cdot)$ on
	\begin{equation*}
		\mathcal{R}:=[0,l]\times\{\varphi \in C^2[0,l]:a-h<\varphi <a+h\}
	\end{equation*}
	as 
	\begin{equation*}
		F(r,\varphi ):=
		\left(
		\frac{r^{k-n}}{e^{n\mu r } }
		\int_{0}^{r} \frac{k}{C_{n-1}^{k-1}}
		\frac{e^{n\mu s}s^{n-1}}{(1+\mu s)^{k-1}} 
		f^k(\varphi(s)) \,{\rm d}s
		\right)^{\frac{1}{k}},
	\end{equation*}	
	where $l$ and $h$ are small enough positive constants.
	Then 
	\begin{equation}
		\varphi^{\prime}(r)=F(r,\varphi )
	\end{equation}
	and $F(r,\varphi )>0$ for $r>0$.
	Define a Euler's break line on $[0,l]$ as 
	\begin{equation}\label{bl}
		\left\{
			\begin{aligned}
			&\psi(0)=a,\\
			&\psi(r)=\psi(r_{i-1})+F\left( r_{i-1},\psi(r_{i-1})\right) (r-r_{i-1}), \quad r_{i-1} < r < r_i,
			\end{aligned}
		\right.
	\end{equation}
	where $0=r_0<r_1<\cdots<r_m=l$. Then $\psi \in C^2[0,l]$. We claim that $(r,\psi)\in \mathcal{R}$.
	In fact, for any $(r,\varphi)\in \mathcal{R}$, the discussion on the following two cases will prove this claim.
	
	 For $1\leq k \leq n$, $\mu \geq 0$, since $f$ is monotonically non-decreasing, we have 
	\begin{equation}\label{F}
		\begin{split}
			F(r,\varphi)
			&\leq \left(
			\frac{r^{k-n}}{e^{n\mu r } }
			\int_{0}^{r} \frac{k}{C_{n-1}^{k-1}}
			\frac{e^{n\mu s}s^{n-1}}{(1+\mu s)^{k-1}}  \,{\rm d}s
			\right)^{\frac{1}{k}}  f(a+h)\\
			&\leq \left(
			\frac{k}{C_{n-1}^{k-1}}
	  		  \frac{r^k}{(1+\mu r)^{k-1}} 
			\right)^{\frac{1}{k}} f(a+h)\\
			&\leq \left(	\frac{k}{C_{n-1}^{k-1}} \right) ^{\frac{1}{k}} r f(a+h)\\
			&\leq \left(	\frac{k}{C_{n-1}^{k-1}} \right) ^{\frac{1}{k}} l f(a+h),
		\end{split}
	\end{equation}
	which implies that
	\begin{equation*}
		M:=\max_{\mathcal{R}} F(r,\varphi)\leq \left(	\frac{k}{C_{n-1}^{k-1}} \right) ^{\frac{1}{k}}l f(a+h).
	\end{equation*}
	Thus for the break line $(r,\psi)$, we have 
	\begin{equation}\label{M}
		\begin{split}
	    a-h<a \leq \psi(r)
		&\leq a+ \sum_{i} F(r_{i-1},\psi (r_{i-1})) (r-r_{i-1}) \\
		&\leq a+ M r\\
		&\leq a+\left(	\frac{k}{C_{n-1}^{k-1}} \right) ^{\frac{1}{k}}l^2 f(a+h), \quad r\in [0,l].
		\end{split}
	\end{equation}	
If $h$ is fixed, we can choose $l$ sufficiently small such that  $\psi (r)\in (a-h,a+h)$ for $r\in [0,l]$.

	For $k=1$, $\mu<0$, we have 
	\begin{equation}\label{F'}
		F(r,\varphi )=
		\frac{1}{e^{n\mu r } r^{n-1}}
		\int_{0}^{r} 
		e^{n\mu s}s^{n-1}
		f(\varphi(s)) \,{\rm d}s
		\leq \frac{r}{e^{n\mu r}} f(a+h)
		\leq \frac{l}{e^{n\mu l}} f(a+h).
	\end{equation}
	Letting
	\begin{equation*}
		M^{\prime }:=\max_{\mathcal{R}} F(r,\varphi)\leq \frac{l}{e^{n\mu l}} f(a+h),
	\end{equation*}
	then for the break line $(r,\psi)$, we have  
	\begin{equation}\label{M'}
		a-h<a\leq\psi(r)\leq a+M^{\prime}r\leq a+\frac{l^2}{e^{n\mu l}} f(a+h), \quad r\in [0,l].
	\end{equation}
If $h$ is fixed, we can choose $l$ sufficiently small such that  $\psi (r)\in (a-h,a+h)$ for $r\in [0,l]$.

	\textit{Step 2: We prove that the Euler's break line $\psi$ is an $\varepsilon$-approximation solution of \eqref{Cauchy problem}}.
	For this, we need to prove that for any $\varepsilon > 0$, there exists a sequence of appropriate points $\{r_i\}_{i=1,\cdots,m}$ such that the break line satisfies
	\begin{equation}\label{2.3g}
		\left| \frac{{\rm d}\psi(r)}{{\rm d}r}-F(r,\psi(r))\right| <\varepsilon, \quad r\in [0,l].
	\end{equation}
	For either the case $k=1$, $\mu\in(-\infty,+\infty)$; or the case $2\leq k\leq n$, $\mu \in [0,+\infty)$, from $(r,\psi)\in \mathcal{R}$ combining with the third line in \eqref{F} and \eqref{F'}, we have
	\begin{equation*}
		\lim_{r\to 0}F(r,\psi)=0
	\end{equation*}
	uniformly for $\psi \in C^2[0,l]$, $a-h<\psi<a+h$. Thus for any $\varepsilon>0$, there exists $\bar r\in (0,l)$ such that for $0\leq r_{i-1}< r< \bar r$,
	\begin{equation*}
		F(r,\psi(r))<\frac{\varepsilon}{2}, \quad
		F(r_{i-1},\psi(r_{i-1}))<\frac{\varepsilon}{2},
	\end{equation*} 
	and then 
	\begin{equation*}
		\left| \frac{{\rm d}\psi(r)}{{\rm d}r}-F(r,\psi(r))\right| 
		=\left| F(r_{i-1},\psi (r_{i-1}))-F(r,\psi(r))\right| <\varepsilon,
		\quad 0\leq r< \bar r.
	\end{equation*}
    For $\bar r\leq r\leq l$, we have
	\begin{equation*}
		\begin{split}
			&\left| \frac{{\rm d}\psi(r)}{{\rm d}r}-F(r,\psi(r))\right|\\
			=&\left| F(r_{i-1},\psi (r_{i-1}))-F(r,\psi(r))\right| \\
			=& \left(\frac{k}{C_{n-1}^{k-1}} \right) ^{\frac{1}{k}}
			\left|
			\left(
			\frac{r_{i-1}^{k-n}}{e^{n\mu r_{i-1} } }
			\int_{0}^{r_{i-1}}
			\frac{e^{n\mu s}s^{n-1}}{(1+\mu s)^{k-1}} 
			f^k(\varphi(s)) \,{\rm d}s
			\right)^{\frac{1}{k}}
			-
			\left(
			\frac{r^{k-n}}{e^{n\mu r } }
			\int_{0}^{r}
			\frac{e^{n\mu s}s^{n-1}}{(1+\mu s)^{k-1}} 
			f^k(\varphi(s)) \,{\rm d}s
			\right)^{\frac{1}{k}}
			\right|\\
			\leq &	\left(\frac{k}{C_{n-1}^{k-1}} \right) ^{\frac{1}{k}}
			\left| 
			\frac{r_{i-1}^{k-n}}{e^{n\mu r_{i-1} } }
			\int_{0}^{r_{i-1}}
			\frac{e^{n\mu s}s^{n-1}}{(1+\mu s)^{k-1}}
			f^k(\varphi(s)) \,{\rm d}s - 
	    	\frac{r^{k-n}}{e^{n\mu r } }
	   		 \int_{0}^{r}
	    	\frac{e^{n\mu s}s^{n-1}}{(1+\mu s)^{k-1}}
	  	 	 f^k(\varphi(s)) \,{\rm d}s
			\right| ^\frac{1}{k}\\
			= &\left(\frac{k}{C_{n-1}^{k-1}} \right) ^{\frac{1}{k}}
			\left( 
			\left| 
			\frac{r_{i-1}^{k-n}}{e^{n\mu r_{i-1} } }- \frac{r^{k-n}}{e^{n\mu r } }
			\right| 
			\int_{0}^{r_{i-1}}
			\frac{e^{n\mu s}s^{n-1}}{(1+\mu s)^{k-1}}
			f^k(\varphi(s)) \,{\rm d}s 
			+\frac{r^{k-n}}{e^{n\mu r } }
			\int_{r_{i-1}}^{r}\frac{e^{n\mu s}s^{n-1}}{(1+\mu s)^{k-1}}
			f^k(\varphi(s)) \,{\rm d}s
			\right)^\frac{1}{k} \\
			\leq &\left(\frac{k}{C_{n-1}^{k-1}} \right) ^{\frac{1}{k}}
			\left( 
			\left| 
			\frac{r_{i-1}^{k-n}}{e^{n\mu r_{i-1} } }- \frac{r^{k-n}}{e^{n\mu r } }
			\right| 
			\frac{e^{n\mu r}r^{n}}{(1+\mu r)^{k-1}}
			f^k(a+h) 
			+
			\frac{r^{k-n}}{e^{n\mu r } }
			\frac{e^{n\mu r}r^{n-1}}{(1+\mu r)^{k-1}}
			f^k(a+h) (r-r_{i-1})
			\right) ^\frac{1}{k}\\
			\leq &\left(\frac{k}{C_{n-1}^{k-1}} \right) ^{\frac{1}{k}}
			\left[		
			\left( 
	 		\left( e^{n\mu(r-r_{i-1})+(n-k)(\ln r-\ln r_{i-1})}-1 \right) l
			+(r-r_{i-1})\right)
			\left( \frac{l}{1+\mu l}\right) ^{k-1} f^k(a+h) 
			\right]  ^\frac{1}{k}.
		\end{split}
	\end{equation*}
	Let $\eta:=\frac{k}{C_{n-1}^{k-1}}\left( \frac{l}{1+\mu l}\right) ^{k-1} f^k(a+h)$. Since $\ln r$ is continuous on $[\bar r,l]$, then $\ln r$ is uniformly continuous on $[\bar r,l]$. For the above $\varepsilon>0$, for $r^{\prime},r^{\prime \prime }\in[\bar r ,l]$, we need to assume that $r^{\prime}$ and $r^{\prime\prime}$ are close in the sense that
	\begin{equation}\label{2.18}
		\begin{aligned}
		|r^{\prime}-r^{\prime\prime}|&<\frac{1}{2}\frac{\varepsilon^k}{\eta},\\
		|\ln r^{\prime}-\ln r^{\prime\prime}|
		&<\frac{1}{n-k}\left( \ln\left(1+\frac{1}{2l}\frac{\varepsilon^k}{\eta} \right)-\frac{n|\mu|}{2}\frac{\varepsilon^k}{\eta} \right).
		\end{aligned}
	\end{equation}
	Indeed, for $r^{\prime},r^{\prime \prime }\in[\bar r ,l]$, letting
	$$\delta(\varepsilon) =\min \left\{\frac{1}{2}\frac{\varepsilon^k}{\eta}, \frac{\bar r}{n-k}\left( \ln\left(1+\frac{1}{2l}\frac{\varepsilon^k}{\eta} \right)-\frac{n|\mu|}{2}\frac{\varepsilon^k}{\eta} \right)\right\},$$
	then the two inequalities in \eqref{2.18} are automatically satisfied by using the mean value theorem when $|r^{\prime}-r^{\prime\prime}|< \delta(\varepsilon)$.
	From \eqref{2.18}, we get
	\begin{equation*}
		\begin{aligned}
			\left( e^{n\mu(r^{\prime}-r^{\prime \prime })+(n-k)(\ln r^{\prime}-\ln r^{\prime \prime })}-1 \right) l
			\le \left( e^{n|\mu| |r^{\prime}-r^{\prime \prime }|+(n-k)|\ln r^{\prime}-\ln r^{\prime \prime }|}-1 \right) l<\frac{1}{2} \frac{\varepsilon^k}{\eta}.
		\end{aligned}
	\end{equation*}
	Combining this with \eqref{2.18}$_1$, we obtain
	\begin{equation*}
		\left( e^{n\mu(r^{\prime}-r^{\prime \prime })+(n-k)(\ln r^{\prime}-\ln 	r^{\prime \prime })}-1 \right) l+(r^{\prime}-r^{\prime \prime }) <  \frac{\varepsilon^k}{\eta},
	\end{equation*}
	which implies 
	\begin{equation*}
   		 \left(\frac{k}{C_{n-1}^{k-1}} \right) ^{\frac{1}{k}}
		\left[		
		\left( 
		\left( e^{n\mu(r^{\prime}-r^{\prime \prime })+(n-k)(\ln r^{\prime}-\ln 	r^{\prime \prime })}-1 \right) l
		+(r^{\prime}-r^{\prime \prime })\right)
		\left( \frac{l}{1+\mu l}\right) ^{k-1} f^k(a+h) 
		\right]  ^\frac{1}{k}<\varepsilon.
	\end{equation*}
	Noticing that  $\delta(\varepsilon)$ is independent of the definition of $\psi$, we can suppose that $r_1=\bar r $ and $\max_{2\leq i \leq m}|r_{i-1}-r_{i}|<\min\{\bar r,\delta(\varepsilon) \}$, then \eqref{2.3g} is obtained.
	Hence, the Euler's break line $\psi$ we defined in \eqref{bl} is an $\varepsilon$-approximation solution of \eqref{Cauchy problem}. 

	\textit{Step 3: We shall find a solution of \eqref{Cauchy problem} by the Euler's break line $\psi$}. 
	Let $\{\varepsilon_j\}_{j=1}^{\infty}$ be a sequence of positive constants converging to $0$. Assume that sequence $\{\psi_j\}$ is an $\varepsilon_j$-approximation solution of \eqref{Cauchy problem} on $[0,l]$, that is, 
	\begin{equation}\label{2.16}
		\left| \frac{{\rm d}\psi_j(r)}{{\rm d}r}-F(r,\psi_j(r))\right| <\varepsilon_j.
	\end{equation} 
	By \eqref{F}, we have
	\begin{equation*}
		\left| \psi_j(r^{\prime \prime })-\psi_j(r^{\prime})\right| <M |r^{\prime \prime }-r^{\prime}|
	\end{equation*}
	for  $r^{\prime \prime },r^{\prime}\in [0,l]$.
	It is easy to see that $\{\psi_j\}$ is equicontinuous and uniformly bounded.
	By the Ascoli-Arzela Lemma, there exists a uniformly convergent subsequence still denoted as $\{\psi_j\}$, without loss of generality. Assume $\lim _{j\to +\infty}\psi_j=\varphi$. Then $\varphi(0)=a$ and $\varphi^{\prime}(0)=0$. By \eqref{2.16}, we have 
	\begin{equation*}
		\frac{{\rm d}\psi_j(r)}{{\rm d}r}=F(r,\psi_j(r))+\triangle_j(r),
	\end{equation*}
	where $|\triangle_j(r)|<\varepsilon_j$ for $r\in[0,l]$. After integrating, we let $j\to +\infty$ and obtain
	\begin{equation}\label{2.17}
		\begin{split}
			\varphi(r)=\lim_{j\to +\infty}\psi_j(r)
			&=a+\lim_{j\to +\infty}\int_0^r\left[  F(s,\psi_j(s))+\triangle_j(s)\right] \,{\rm d}s\\
			&=a+\int_0^r \lim_{j\to +\infty}\left[  F(s,\psi_j(s))+\triangle_j(s)\right]\,{\rm d}s\\
			&=a+\int_0^r F(s,\varphi(s))\,{\rm d}s.
		\end{split}
	\end{equation}
	Differentiating \eqref{2.17}, we can see that $\varphi(r)$ satisfies \eqref{Cauchy problem} on $[0,l]$. Up to now, we can extend the interval $[0,l]$ to that with longer length, and thus there  exists a positive number $R$ such that $\varphi(r)$ satisfies \eqref{Cauchy problem} on $[0,R]$, especially on $[0,R)$.
	\end{proof}
\vspace{3mm}

\section{Proofs of main results}\label{Section 3}
In this section, we first prove Theorem \ref{Th1.2} by four steps. Then by modifying the conditions in the third step, we also get the proof of Theorem \ref{Th1.1}, which shows the results of existence and nonexistence. The degenerate case in Theorem \ref{Th1.3} is also proved.

First, we start to prove Theorem \ref{Th1.2} step by step.

	\begin{proof}[Proof of Theorem \ref{Th1.2}]
	The proof is divided into four steps. 
	In the first step, we discuss the relation between the admissible subsolutions to \eqref{1.1} and the solutions to \eqref{ODE} which satisfies \eqref{k-cone} in a bounded ball.
	Then we build an entire existential equivalence relationship between the admissible subsolutions to \eqref{1.1} and the admissible solutions to the Cauchy problem \eqref{Cauchy problem} in the second step. 
	In the third step, a necessary and sufficient condition for the existence and nonexistence of entire admissible solutions to the Cauchy problem \eqref{Cauchy problem} is proved.
	In the last step, we conclude that the necessary and sufficient condition shown in the third step is of the solvability for entire admissible subsolutions of equation \eqref{1.1}.

	\textit{Step 1:  $u(x)\leq\varphi(r)$ in $B_R(0)$}. 
	Suppose that there exists $\varphi(r)\in C^2[0,R)$ satisfying \eqref{ODE} and \eqref{k-cone} for $r\in [0,R)$, with $\varphi^{\prime}(0)=0$ and $\varphi(r)\to +\infty$ as $r\to R$. Meanwhile, $u(x)\in \Phi_\mu^k(\mathbb{R}^n)$ is a subsolution of equation \eqref{1.1}. 
	We let $v(x)=\varphi(r)$, where $r=|x|$. It follows from Lemma \ref{Lem2.2} that $\lambda(D^2v+\mu|D v|I)\in 
	\Gamma_k$, which means $v(x)\in \Phi_\mu^k(B_R(0))$ and $v(x)$ satisfies $S_k(D^2v+\mu|D v|I)=f^k(v)$ for $|x|<R$. Thus we turn to prove that $u(x)\leq v(x)$ in $B_R(0)$.  

Suppose on the contrary that $u> v$ at some point in $B_R(0)$. 
Then there exists some positive constant $c_0$ such that $u -c_0$
touches $v$ from below at some interior point $x_0$ in $B_R(0)$, i.e.,
$u(x_0)-c_0-v(x_0)=0$ and $u-c_0-v\leq 0$ in
$B_R(0)$. Since $v(x)\to +\infty$ as $|x|\to R$, we can choose $R^{\prime} \in (0, R)$ such that $B_{R^{\prime}}(0)$ contains $x_0$
and $\sup_{\partial B_{R^{\prime}}(0)}(u-c_0-v)<0$.
Let $L[\omega]:=S_k(D^2\omega+\mu|D \omega|I)-f^k(\omega)$. Combining with the monotonicity of $f$ and the definition of subsolution \eqref{subsol}, we have 
\begin{equation*}
	\begin{split}
		L[u -c_0]
		&=S_k(D^2(u -c_0)+\mu|D (u -c_0)|I)-f^k(u -c_0)\\
		&=S_k(D^2u+\mu|D u)|I)-f^k(u -c_0)\\
		&=S_k(D^2u+\mu|D u)|I)-f^k(u)+\left( f^k(u)-f^k(u -c_0)\right) \\
		&\geq 0,\quad x\in   B_{R^{\prime}}(0).
	\end{split}
\end{equation*}
Additionally, we have $L[v]=S_k(D^2v+\mu|D v|I)-f^k(v)=0$ in $B_{R^{\prime}}(0)$.
Therefore by $x_0\in B_{R^{\prime}}(0)$ and the maximum principle, we have 
\begin{equation*}
	0=\sup_{B_{R^{\prime}}(0)}(u-c_0-v)=\sup_{\partial B_{R^{\prime}}(0)}(u-c_0-v)<0,
\end{equation*}
which is impossible.

\textit{Step 2: \eqref{1.1} has a subsolution $u(x)\in \Phi_\mu^k(\mathbb{R}^n)$ if and only if \eqref{Cauchy problem} has a solution $\varphi(r)\in C^2[0,+\infty)$ satisfying \eqref{k-cone}}.
First, we prove the sufficiency.
If for $R=+\infty$, the Cauchy problem \eqref{Cauchy problem} has such a solution $\varphi(r)$, we assume $v(x)=\varphi(r)$. 
By Lemma \ref{Lem2.1} and Lemma \ref{Lem2.2}, we know that for $x \in\mathbb{R}^n$ ,
\begin{equation*}
	S_k(D^2v(x)+\mu|D v(x))|I)=f^k(v(x)),
\end{equation*}
and $\lambda(D^2v(x)+\mu|D v(x))|I) \in \Gamma_{k}$.
Then $v(x)\in \Phi_\mu^k(\mathbb{R}^n)$ is a required solution of equation \eqref{1.1}.

Next, we prove the necessity, that is, if there exists a subsolution $u(x)\in \Phi_\mu^k(\mathbb{R}^n)$ of equation \eqref{1.1}, then we want to prove that  solutions of the Cauchy problem \eqref{Cauchy problem} exist in $[0,+\infty)$ for any initial value $a$.
Suppose that \eqref{Cauchy problem} has no global solution $\varphi(r)$.
It follows from Lemma \ref{Lem2.3} that the Cauchy problem \eqref{Cauchy problem} has a local solution  $\varphi(r)$ existing on some intervals with $\varphi(0)=a$ and $\varphi^{\prime}(0)=0$.  
Then there is a maximal interval $[0, R)$ in which the solution exists. 
By Lemma \ref{Lem2.3}, $\varphi^{\prime}(r)>0$, we know that $\varphi(r)\to +\infty$ as $r\to R$, 
if not, which shows that $\varphi(r)$ is bounded on $[0,R]$, then we can also define an Euler's break line on it to approximate the solution of \eqref{1.1}, which contradicts the maximum interval.
In addition, by Lemma \ref{Lem2.2}, $\varphi(r)$ satisfies \eqref{ODE} and \eqref{k-cone} in $B_R(0)$.
On the other hand, we know from the first step that any subsolution $u(x)\in \Phi_\mu^k(\mathbb{R}^n)$ of equation \eqref{1.1} would satisfy $u(x)\leq\varphi(r)$ for $|x|\leq R$. 
Then we have $u(0)\leq \varphi(0)=a$ particularly. 
Since $a$ is arbitrary, we get a contradiction by choosing
$a = \frac{u(0)}{2}$ if $u(0)>0$, $a = 2u(0)$ if $u(0)<0$ and $a =-1$ if $u(0)=0$.

\textit{Step 3: \eqref{Cauchy problem} has a solution $\varphi(r)\in C^2[0,+\infty)$ satifying \eqref{k-cone} if and only if \eqref{nsc} holds}. 
First, we prove the sufficient condition. 
To simplify the presentation,  we multiply both sides of equation \eqref{ODE} by $\frac{r^k}{(1+\mu r)^{k-1}}$ and unify similar terms,
\begin{equation}\label{mODE}
	\begin{split}
		\frac{r^k}{(1+\mu r)^{k-1}}f^k(\varphi(r))
		&=C_{n-1}^{k-1}r
		\left(\varphi^{\prime\prime}(r)+\mu\varphi^{\prime}(r) \right) 
		\left(\varphi^{\prime}(r) \right)^{k-1} +
		C_{n-1}^k(1+\mu r)
		\left(\varphi^{\prime}(r) \right)^{k}\\
		&=C_{n-1}^{k-1}r\varphi^{\prime\prime}(r)\left(\varphi^{\prime}(r) \right)^{k-1}+
		\left(C_n^k \mu r+ C_{n-1}^k\right) \left(\varphi^{\prime}(r) \right)^{k}.
	\end{split}
\end{equation} 
Suppose that there is no such solution of \eqref{Cauchy problem}. 
However we can find a solution $\varphi(r)$ of \eqref{Cauchy problem} existing in $C^2[0,R)$ with $a=0$ by Lemma \ref{Lem2.3}, and satisfying $\varphi(r)\to +\infty$ as $r\to R$, where $[0,R)$ is the maximal existence interval and $R<+\infty$. 
It follows from Lemma \ref{Lem2.2} that $\varphi(r)$ satisfies the simplified equation \eqref{mODE} for $r\in [0,R)$.

For the case $1\leq k \leq n$, $\mu\ge 0$, since $\varphi^{\prime}(r)>0$, it follows from \eqref{mODE} that 
\begin{equation}\label{3.2}
	C_{n-1}^{k-1}\varphi^{\prime\prime}(r)\left(\varphi^{\prime}(r) \right)^{k-1}<\left( \frac{r}{1+\mu r}\right) ^{k-1}f^k(\varphi(r)), \quad r\in [0,R).
\end{equation}
Multiplying both sides of \eqref{3.2} by $\varphi^{\prime}(r)$, we get 
\begin{equation*}
\begin{split}
	\varphi^{\prime\prime}(r)\left(\varphi^{\prime}(r) \right)^{k}
	&<C \left( \frac{r}{1+\mu r}\right) ^{k-1}f^k(\varphi(r))\varphi^{\prime}(r)\\
	&<C \left( \frac{R}{1+\mu R}\right) ^{k-1}f^k(\varphi(r))\varphi^{\prime}(r),  \quad r\in [0,R),
\end{split}
\end{equation*}
where $C=(C_{n-1}^{k-1})^{-1}$ and it may be taken as different constant depending on $n$ and $k$ in the sequel.
Integrating the above inequality from $0$ to $r$, combining with $\varphi(0)=0$ and $\varphi^{\prime}(0)=0$, we have 
\begin{equation*}
	\left(\varphi^{\prime}(r) \right)^{k+1}<C\left( \frac{R}{1+\mu R}\right) ^{k-1}
	\int_{0}^{\varphi(r)}f^k(t) \,{\rm d}t.
\end{equation*}
By a simple calculation, we get
\begin{equation}\label{3.3le}
	\left( \int_{0}^{\varphi(r)}f^k(t) \,{\rm d}t\right)^{-\frac{1}{k+1}} \,{\rm d}\varphi 
<C\left( \frac{R}{1+\mu R}\right) ^{\frac{k-1}{k+1}}\,{\rm d}r.
\end{equation}
Integrating \eqref{3.3le} from $0$ to $R$ and using the fact that $\varphi(0)=0$ and $\varphi(R)=+\infty$, we get 
\begin{equation}
	\int_{0}^{+\infty}\left( \int_{0}^{\varphi}f^k(t) \,{\rm d}t\right)^{-\frac{1}{k+1}} \,{\rm d}\varphi 
	<C\left( \frac{R}{1+\mu R}\right) ^{\frac{k-1}{k+1}}R
	<+\infty.
\end{equation}
which is a contradiction to \eqref{nsc}.

For the case $k =1$, $\mu< 0$, \eqref{mODE} becomes
\begin{equation}\label{k =1 case 1}
\begin{split}
r f(\varphi(r)) = & r\varphi^{\prime\prime}(r) + [n\mu r + (n-1)]\varphi^\prime(r) \\
> & r\varphi^{\prime\prime}(r) + n\mu r \varphi^\prime(r),
\end{split}
\end{equation}
where $\varphi^\prime(x)>0$ is used in the inequality. Multiplying both sides of \eqref{k =1 case 1} by $e^{2n\mu r}\varphi^\prime(r)/r$, we get
\begin{equation*}
\left[\frac{1}{2}e^{2n\mu r} (\varphi^\prime(r))^2\right]^\prime < e^{2n\mu r}f(\varphi(r))\varphi^\prime(r).
\end{equation*}
Integrating from $0$ to $r$, using $\varphi^\prime(0)=0$ and $\mu<0$, we get
\begin{equation*}
\frac{1}{2}e^{2n\mu r} (\varphi^\prime(r))^2 < \int_{0}^r e^{2n\mu s}f(\varphi(s))\varphi^\prime(s) {\rm d}s \le \int_{0}^{\varphi(r)} f(t){\rm d}t,
\end{equation*}
which leads to
\begin{equation}\label{k = 1 case 2}
\left ( \int_{0}^\varphi f(t) {\rm d}t\right)^{-\frac{1}{2}} {\rm d}\varphi < \sqrt{2} e^{-n\mu R} {\rm d} r.
\end{equation}
Integrating \eqref{k = 1 case 2} from $0$ to $R$ and using the fact that $\varphi(0)=0$ and $\varphi(R)=+\infty$, we get 
\begin{equation}
\int_{0}^{+\infty}\left( \int_{0}^{\varphi}f(t) \,{\rm d}t\right)^{-\frac{1}{2}} \,{\rm d}\varphi < \int_{0}^R \sqrt{2} R e^{-n\mu R}{\rm d}r = \sqrt{2} R e^{-n\mu R} < +\infty,
\end{equation}
which is a contradiction to \eqref{nsc} for $k=1$.

Next, we prove the necessary condition. 
Assume on the contrary that \eqref{nsc} does not hold, namely,
\begin{equation}\label{3.3ass}
	\int^{+\infty}\left(\int_{0}^{\tau}f^k(t)\,{\rm d}t \right) ^{-\frac{1}{k+1}}\,{\rm d}\tau<+\infty.
\end{equation}
We claim that 
\begin{equation}\label{3.3geq}
	\frac{f(t)}{t}\to +\infty, \quad {\rm as} \ t \to +\infty.
\end{equation}
To prove \eqref{3.3geq}, we let 
\begin{equation*}
	g(\tau)=\left(\int_{0}^{\tau}f^k(t)\,{\rm d}t \right) ^{-\frac{1}{k+1}}.
\end{equation*}
By \eqref{3.3ass}, we have $\int^{+\infty}g(\tau)\,{\rm d}\tau<+\infty$, which deduces that $\int_s^{+\infty}g(\tau)\,{\rm d}\tau\to 0$ as $s\to +\infty$.
Moreover, noticing that $g(\tau)$ is non-increasing in $(0, +\infty)$, we have 
\begin{equation*}
	0<\tau g(\tau)\leq 2\int_{\frac{\tau}{2}}^{\tau}g(s)\,{\rm d}s
	<2\int_{\frac{\tau}{2}}^{+\infty}g(s)\,{\rm d}s \to 0, \quad \rm{as} \ \tau\to +\infty.
\end{equation*}
Thus 
\begin{equation*}
	\tau g(\tau)=\tau \left(\int_{0}^{\tau}f^k(t)\,{\rm d}t\right) ^{-\frac{1}{k+1}}\to 0,\quad {\rm as} \ \tau\to 0.
\end{equation*}
Combining this with the monotonically non-decreasing property of $f$, we have
\begin{equation*}
\left(\frac{ f(\tau)}{\tau} \right)^k =\frac{\tau f^k(\tau)}{\tau^{k+1}}
	\geq\frac{\int_{0}^{\tau}f^k(t)\,{\rm d}t}{\tau^{k+1}}\to +\infty, \quad \rm{as} \ \tau\to +\infty.
\end{equation*}
Hence \eqref{3.3geq} holds. Therefore there exists $t_1 > 0$ such that 
\begin{equation}\label{3.5}
	f(t)>t-\varphi(0)
\end{equation}
for $t>t_1$. 
Since  
\begin{equation*}
	\begin{split}
		\varphi^{\prime}(r)
		&=\left(
		\frac{r^{k-n}}{e^{n\mu r } }
		\int_{0}^{r} \frac{k}{C_{n-1}^{k-1}}
		\frac{e^{n\mu s}s^{n-1}}{(1+\mu s)^{k-1}} 
		f^k(\varphi(s)) \,{\rm d}s
		\right)^{\frac{1}{k}}\\
		&\geq \left( \frac{k}{C_{n-1}^{k-1}}\right)^{\frac{1}{k}} f(\varphi(0))
		\left(
		\frac{r^{k-n}}{e^{n\mu r } }
		\int_{0}^{r}
		\frac{e^{n\mu s}s^{n-1}}{(1+\mu s)^{k-1}}  \,{\rm d}s
		\right)^{\frac{1}{k}},
	\end{split}
\end{equation*}
where \eqref{Cauchy problem} and $\varphi(r)>\varphi(0)$ are used in the above inequality, we can fix a constant $r_1>0$ such that for $r>r_1$,
\begin{equation}\label{3.7}
		\varphi^{\prime}(r)
		\geq \left( \frac{k}{C_{n-1}^{k-1}}\right)^{\frac{1}{k}} 
		\left( \frac{r_1}{1+\mu r_1} \right) ^{\frac{k-1}{k}}
		f(\varphi(0))
		\left(
		\frac{r^{k-n}}{e^{n\mu r } }
		\int_{r_1}^{r}
		e^{n\mu s}s^{n-k}  \,{\rm d}s
		\right)^{\frac{1}{k}}.
\end{equation}
We notice that
\begin{equation}\label{3.8}
	\begin{split}
		&\int_{r_1}^{r} e^{n\mu s} s^{n-k}\,{\rm d}s\\
	    =&\left. \frac{1}{n \mu}s^{n-k}e^{n\mu s}\right| _{r_1}^{r}
	    +(-1)\left. \frac{n-k}{(n \mu)^2}s^{n-k-1}e^{n\mu s}\right| _{r_1}^{r}
	    +(-1)^2\left. \frac{(n-k)(n-k-1)}{(n \mu)^3}s^{n-k-2}e^{n\mu s}\right| _{r_1}^{r}
	    +\cdots\\
	    &+\left.(-1)^{n-k-1} \frac{\prod_{i=2}^{n-k}i}{(n \mu)^{n-k}}se^{n\mu s}\right| _{r_1}^{r}
	    +\left.(-1)^{n-k} \frac{\prod_{i=1}^{n-k}i}{(n \mu)^{n-k+1}}e^{n\mu s}\right| _{r_1}^{r}\\
	    =& M+ \frac{1}{n \mu}r^{n-k}e^{n\mu r}
	    +(-1)\frac{n-k}{(n \mu)^2}r^{n-k-1}e^{n\mu r}
	    +(-1)^2\frac{(n-k)(n-k-1)}{(n \mu)^3}r^{n-k-2}e^{n\mu r}+
	    \cdots\\
	    &+(-1)^{n-k-1} \frac{\prod_{i=2}^{n-k}i}{(n \mu)^{n-k}}re^{n\mu r}
	    +(-1)^{n-k} \frac{\prod_{i=1}^{n-k}i}{(n \mu)^{n-k+1}}e^{n\mu r},
	\end{split}
\end{equation}
where the method of integration by parts is used in the second line and the constant $M$ is bounded depending only on $k$, $n$ and $r_1$. 
Substituting \eqref{3.8} into \eqref{3.7}, we get
\begin{equation}\label{3.9}
	\begin {split}
	& \varphi^{\prime}(r)
		\geq 
	     C
	     \left(\frac{1}{e^{n\mu r} r^{n-k}}
	     \int_{r_1}^{r} e^{n\mu s} s^{n-k}\,{\rm d}s\right) ^\frac{1}{k}\\
		=&C\left[ \frac{n\mu M}{e^{n\mu r} r^{n-k}}+
		\left( 1+(-1)\frac{n-k}{n \mu r}+(-1)^2
		\frac{(n-k)(n-k-1)}{(n \mu r)^2}+\cdots
		+(-1)^{n-k} \frac{\prod_{i=1}^{n-k}i}{(n \mu r)^{n-k}}\right)\right] ^\frac{1}{k},
	\end{split}
\end{equation}
where the constant $C$ denotes $\frac{1}{n \mu}\left( \frac{k}{C_{n-1}^{k-1}}\right)^{\frac{1}{k}} 
\left( \frac{r_1}{1+\mu r_1} \right) ^{\frac{k-1}{k}} f(\varphi(0))$.
By choosing $r_2>1$ larger if necessary such that for $r>r_2$,
\begin{equation}\label{3.10}
	0<\frac{n\mu M}{e^{n\mu r} r^{n-k}}+
	\left( 1+(-1)\frac{n-k}{n \mu r}+(-1)^2
	\frac{(n-k)(n-k-1)}{(n \mu r)^2}+\cdots
	+(-1)^{n-k} \frac{\prod_{i=1}^{n-k}i}{(n \mu r)^{n-k}}\right)<1,
\end{equation}
and applying \eqref{3.10} to \eqref{3.9}, we get for $r>r_2$,
\begin{equation*}
		\varphi^{\prime}(r)\geq 
		C\left[ \frac{n\mu M}{e^{n\mu r} r^{n-k}}+
		\left( 1+(-1)\frac{n-k}{n \mu r}+(-1)^2
		\frac{(n-k)(n-k-1)}{(n \mu r)^2}+\cdots
		+(-1)^{n-k} \frac{\prod_{i=1}^{n-k}i}{(n \mu r)^{n-k}}\right)\right].
\end{equation*}
Integrating the above inequality from $r_2$ to $r$, we have for $r>r_2$,
\begin{equation*}
	\begin{split}
		\varphi(r)
		\geq&\varphi(r_2)+ C
		\left[  \int_{r_2}^{r}\frac{n\mu M}{e^{n\mu s} s^{n-k}}\,{\rm d}s+
		(r-r_2)+(-1)\frac{n-k}{n\mu}\left( \ln r-\ln r_2\right)  + (-1)^2\frac{(n-k)(n-k-1)}{(n \mu)^2} \right.\\
		&\left.
		\left( \frac{1}{r_2}-\frac{1}{r}\right)+\cdots +(-1)^{n-k} \frac{\prod_{i=1}^{n-k}i}{(n-k-1)(n \mu)^{n-k}}\left( \frac{1}{r_2^{n-k-1}}-\frac{1}{r^{n-k-1}}\right) \right] .
	\end{split}
\end{equation*}
It follows that 
$\varphi(r)\to +\infty$ as $r\to +\infty$,   
there always exists $r^{\prime }>\max \{r_1,r_2\}$ such that $\varphi(r)>t_1$ for $r>r^{\prime }$. Thus for $r>r^{\prime }$, according to \eqref{3.5} we have 
\begin{equation}\label{relation}
	f(\varphi(r))>\varphi(r)-a.
\end{equation}

\vspace{2mm}

For the case $1\leq k \leq n$, $0\leq \mu< \mu_0$,
since $\varphi^{\prime}(r)>0$ for $r>0$, it follows from \eqref{mODE} that 
\begin{equation*}
	C_{n-1}^{k-1}\varphi^{\prime\prime}(r)\left(\varphi^{\prime}(r) \right)^{k-1}
	<\left( \frac{r}{1+\mu r}\right) ^{k-1}f^k(\varphi(r)).
\end{equation*}
Multiplying the above inequality by $\varphi^{\prime}(r)$ and integrating on $r$, we have 
\begin{equation}\label{3.12}
	\begin{split}
		C_{n-1}^{k-1}\left(\varphi^{\prime}(r) \right)^{k+1}
		&< (k+1) \int_{0}^{r} \left( \frac{s}{1+\mu s}\right) ^{k-1}f^k(\varphi(s)) \varphi^{\prime}(s) \,{\rm d}s\\
		&< (k+1) \left( \frac{r}{1+\mu r}\right) ^{k-1}  f^k(\varphi(r) )
		\left( \varphi(r)-a\right) \\
		&<(k+1) \left( \frac{r}{1+\mu r}\right) ^{k-1}  f^{k+1}(\varphi(r) )
	\end{split}
\end{equation}
for $r>r^{\prime }$, where $\varphi^{\prime}(0)=0$, the monotonicity of $f$ and \eqref{relation} are used. 
By \eqref{3.12}, for $r>r^{\prime }$, we have 
\begin{equation}\label{3.14}
	\begin{split}
		\left(C_n^k \mu r+ C_{n-1}^k\right) \left(\varphi^{\prime}(r) \right)^{k}
		&<\left(C_n^k \mu r+ C_{n-1}^k\right) 
		\left( \frac{k+1}{C_{n-1}^{k-1}} \left( \frac{r}{1+\mu r}\right) ^{k-1} f^{k+1}(\varphi(r) )\right) ^{\frac{k}{k+1}} \\
		&=\left( \frac{k+1}{C_{n-1}^{k-1}}\right) ^{\frac{k}{k+1}} 
		\frac{(C_n^k \mu r+ C_{n-1}^k)(1+\mu r)^\frac{k-1}{k+1}}{r^{\frac{2k}{k+1}}}
		\frac{r^k}{(1+\mu r)^{k-1}}f^k(\varphi(r)).
	\end{split}
\end{equation}
Since 
\begin{equation}\label{3.15}
	\lim_{r\to +\infty}\frac{(C_n^k \mu r+ C_{n-1}^k)(1+\mu r)^\frac{k-1}{k+1}}{r^{\frac{2k}{k+1}}}=C_n^k\mu^{\frac{2k}{k+1}},
\end{equation}
there exists some constant $r^{\prime \prime }>r^{\prime }$ such that
\begin{equation*}
	\frac{(C_n^k \mu r+ C_{n-1}^k)(1+\mu r)^\frac{k-1}{k+1}}{r^{\frac{2k}{k+1}}}
	\leq C_n^k\left( \frac{\mu +\mu_0}{2}\right) ^{\frac{2k}{k+1}}
\end{equation*}
for $r>r^{\prime \prime }$. 
Combining \eqref{3.14} and \eqref{3.15}, for $r>r^{\prime \prime }$, we have 
\begin{equation}\label{3.16}
	\begin{split}
		\left(C_n^k \mu r+ C_{n-1}^k\right) \left(\varphi^{\prime}(r) \right)^{k}
		&<\left( \frac{k+1}{C_{n-1}^{k-1}}\right) ^{\frac{k}{k+1}} 
		 C_n^k\left( \frac{\mu +\mu_0}{2}\right) ^{\frac{2k}{k+1}}
		\frac{r^k}{(1+\mu r)^{k-1}}f^k(\varphi(r))\\
		&=\left( \frac{k+1}{C_{n-1}^{k-1}}\right) ^{\frac{k}{k+1}} 
		C_n^k(1-\delta)\mu_0^{\frac{2k}{k+1}}
		\frac{r^k}{(1+\mu r)^{k-1}}f^k(\varphi(r))\\
		&=(1-\delta)\frac{r^k}{(1+\mu r)^{k-1}}f^k(\varphi(r)),
	\end{split}
\end{equation}
where the constant $\delta:=1-\left(\frac{\mu+\mu_0}{2\mu_0} \right)^\frac{2k}{k+1}\in (0,1)$ and $\mu_0$ in \eqref{mu 0} are used in the last equality.
Thus, by \eqref{mODE} and \eqref{3.16}, for $r>r^{\prime \prime }$, we have
\begin{equation}\label{3.11}
	C_{n-1}^{k-1}\varphi^{\prime\prime}(r)\left(\varphi^{\prime}(r) \right)^{k-1}
	>\delta  \left( \frac{r}{1+\mu r}\right) ^{k-1}f^k(\varphi(r)).
\end{equation}
Multiplying both sides of \eqref{3.11} by $\varphi^{\prime}(r)$ and integrating on $r$,  we get
\begin{equation*}
	\left(\varphi^{\prime}(r) \right)^{k+1}
	>\left(\varphi^{\prime}(r) \right)^{k+1}-\left(\varphi^{\prime}(r^{\prime \prime)} \right)^{k+1}>
	C  \int_{r^{\prime \prime }}^{r} \left( \frac{s}{1+\mu s}\right) ^{k-1}f^k(\varphi(s)) \varphi^{\prime}(s) \,{\rm d}s,	
\end{equation*}
where we use the fact that $\varphi^{\prime}(r)>0$ for $r>0$ in the above inequality. 
Fixing a constant $r_3>r^{\prime \prime }$ such that for $r>r_3$,
\begin{equation*}
	\begin{split}
	\left(\varphi^{\prime}(r) \right)^{k+1}
	&>C  \int_{r_3}^{r} \left( \frac{s}{1+\mu s}\right) ^{k-1}f^k(\varphi(s)) \varphi^{\prime}(s) \,{\rm d}s\\
	&>C    \left( \frac{r_3}{1+\mu r_3}\right) ^{k-1}
	 \int_{\varphi(r_3)}^{\varphi(r)} f^k(t) \,{\rm d}t.
	\end{split}
\end{equation*}
By a simple calculation,  we have 
\begin{equation*}
	\left( \int_{\varphi(r_3)}^{\varphi(r)} f^k(t) \,{\rm d}t\right) ^{-\frac{1}{k+1}} \,{\rm d}\varphi > C\,{\rm d}r,
\end{equation*}
where the constant $C$ also depends on $r_3$.
Integrating again and noticing that $f$ is monotonically non-decreasing, we have for $\tau >2 \varphi(r_3)$,
\begin{equation}\label{3.3res}
	\begin{aligned}
    C(r-r_3)&<\int_{\varphi(r_3)}^{\varphi(r)}\left( \int_{\varphi(r_3)}^{\tau} f^k(t) \,{\rm d}t\right) ^{-\frac{1}{k+1}} \,{\rm d} \tau \\
    &\leq\int_{\varphi(r_3)}^{+\infty}\left( \int_{\varphi(r_3)}^{\tau} f^k(t) \,{\rm d}t\right) ^{-\frac{1}{k+1}} \,{\rm d} \tau \\
    &\leq\int_{\varphi(r_3)}^{+\infty}\left( \int_{0}^{\frac{\tau}{2}} f^k(t) \,{\rm d}t\right) ^{-\frac{1}{k+1}} \,{\rm d} \tau \\
    &=2\int_{\frac{\varphi(r_3)}{2}}^{+\infty}\left( \int_{0}^{\tau} f^k(t) \,{\rm d}t\right) ^{-\frac{1}{k+1}} \,{\rm d} \tau .
	\end{aligned}
\end{equation}
Since $r$ can be arbitrarily large, \eqref{3.3res} contradicts with \eqref{3.3ass}.

\hspace{2mm}

For the case $k=1$, $\mu <0$, equation \eqref{mODE} reduces to 
\begin{equation}\label{k=1}
	r f(\varphi(r))= r \varphi^{\prime \prime }(r)+(n\mu r+n-1) \varphi^{\prime}(r).
\end{equation}
Since $f>0$, it follows from \eqref{Cauchy problem} that 
\begin{equation*}
	\varphi^{\prime}(r)=\frac{1}{e^{n\mu r}r^{n-1}}\int_{0}^re^{n\mu s}s^{n-1}f(\varphi(r))\,{\rm d}s>0
\end{equation*} 
for $r>0$. 
If $r>\frac{1-n}{n\mu}$, we have $n\mu r+n-1<0$. 
Hence, we have 
\begin{equation}\label{3.20}
	(n\mu r+n-1) \varphi^{\prime}(r)<0
\end{equation}
for $r>\frac{1-n}{n\mu}$. Applying \eqref{3.20} to \eqref{k=1}, we have $\varphi^{\prime \prime }(r)>f(\varphi(r))$ for $r>\frac{1-n}{n\mu}$ .
Multiplying both sides by $\varphi^{\prime}(r)$ and integrating from $\frac{1-n}{n\mu}$ to $r$, we get
\begin{equation*}
	\begin{split}
	\left( \varphi^{\prime}(r)\right) ^2
	>&\left( \varphi^{\prime}(r)\right) ^2-\left( \varphi^{\prime}(\frac{1-n}{n\mu})\right) ^2\\
	>&2\int_{\frac{1-n}{n\mu}}^{r} f(\varphi(s))  \varphi^{\prime}(s) \,{\rm d}s\\
	=&2 \int_{\varphi(\frac{1-n}{n\mu})}^{\varphi(r)} f(t) \,{\rm d}t.
	\end{split}
\end{equation*}
By a simple calculation,  we have 
\begin{equation*}
	\left( \int_{\varphi(\frac{1-n}{n\mu})}^{\varphi(r)} f(t) \,{\rm d}t\right) ^{-\frac{1}{2}} \,{\rm d}\varphi > \sqrt{2}\,{\rm d}r.
\end{equation*}
Then similar to \eqref{3.3res} we get a contradiction,  which completes the proof.

\textit{Step 4: Conclusion}. Combining Steps 2 and 3, we deduce that equation \eqref{1.1} has a subsolution $u(x)\in \Phi^k(\mathbb{R}^n)$
if and only if the Cauchy problem \eqref{Cauchy problem} has a solution $\varphi(r)\in C^2[0,+\infty)$ satisfying \eqref{k-cone} for any initial value $a$, which is equivalent to the condition \eqref{nsc}.
\end{proof}
\begin{remark}\label{Remark 3.1}
	It is worth noting that for the proof of the sufficient condition in the third step of Theorem \ref{Th1.2}, we have no upper bound restriction on the parameter $\mu$. This effectively means that for either the case $k=1$, $\mu\in(-\infty,+\infty)$; or the case $2\leq k\leq n$, $\mu \in [0,+\infty)$, if the condition \eqref{nsc} holds, then there exists a solution to the Cauchy problem \eqref{Cauchy problem}. 
\end{remark}

Next, we will prove Theorem \ref{Th1.1} with the help of the proof of Theorem \ref{Th1.2}. 

\begin{proof}[Proof of Theorem \ref{Th1.1}]
    We first prove the existence result. For either the case $k=1$, $\mu\in(-\infty,+\infty)$; or the case $2\leq k\leq n$, $\mu \in [0,+\infty)$, if the condition \eqref{nsc} holds, by using the sufficient condition in the third step in the proof of Theorem \ref{Th1.2}, we obtain the existence of solutions to \eqref{Cauchy problem}, see Remark \ref{Remark 3.1}. Then with the aid of the second step in the proof of Theorem \ref{Th1.2}, we actually obtain the existence of subsolutions to \eqref{1.1}. 
	Alternatively, for $2\leq k\leq n$, $\mu \in [0,+\infty)$, the existence of admissible subsolutions to \eqref{1.1} can be directly obtained by the existence result of the standard $k$-Hessian equation in \cite{JB}, that is if equation $S_k^{\frac{1}{k}}(D^2u)=f(u)$ admits an entire admissible subsolution, then the existence of entire admissible subsolutions to equation \eqref{1.1} is also obtained. The key technique is the use of the inequality 
	$$S_k^{\frac{1}{k}}(D^2u+\mu|D u|I)\geq S_k^{\frac{1}{k}}(D^2u) $$ 
	for $\mu \geq 0$. 
	
	Next, we prove the nonexistence result.
   For either the case $k=1$, $\mu\in(-\infty,\mu_0)$; or the case $2\leq k\leq n$, $\mu \in [0,\mu_0)$, it follows from Theorem \ref{Th1.2} that equation \eqref{1.1} admits an entire admissible subsolution if and only if \eqref{nsc} holds. 
   Therefore, if $f$ satisfies
   \begin{equation*}
   	\int^{+\infty}\left(\int_{0}^{\tau}f^k(t)\,{\rm d}t \right) ^{-\frac{1}{k+1}}\,{\rm d}\tau<+\infty,
   \end{equation*} 
   then \eqref{nsc} fails, which shows that equation \eqref{1.1} admits no subsolution $u\in \Phi_{\mu}^k(\mathbb{R}^n) $.
   For the case $2\leq k\leq n$, $\mu\in(-\infty,0)$, it follows from Remark \ref{re2.1} that equation \eqref{1.1} admits no entire subsolution satisfying \eqref{k-cone}.
\end{proof}

To end this section, according to the proof of Theorem \ref{Th1.2}, we give the proof of Theorem \ref{Th1.3}. Note that $f(t)$ is degenerate at $t=0$, we turn to seek positive solutions to avoid the degeneracy.

\begin{proof}[Proof of Theorem \ref{Th1.3}]
For either $k=1$, $\mu\in(-\infty,\mu_0)$, or $2\leq k\leq n$, $\mu \in [0,\mu_0)$, if $f$ satisfies \eqref{f0}, by \eqref{Cauchy problem}, we have  
\begin{equation*}
    	\varphi^{\prime}(r)=\left(
    	\frac{r^{k-n}}{e^{n\mu r } }
    	\int_{0}^{r} \frac{k}{C_{n-1}^{k-1}}
    	\frac{e^{n\mu s}s^{n-1}}{(1+\mu s)^{k-1}} 
    	f^k(\varphi(s)) \,{\rm d}s
    	\right)^{\frac{1}{k}}\geq 0
\end{equation*}
for $r>0$, which shows that $\varphi$ is monotonically non-decreasing. 
In order to discuss the radial solutions of \eqref{1.1} in a convex cone, for any positive number $a$, we let the Cauchy problem \eqref{Cauchy problem} have initial value $\varphi(0)=a>0$.
Combining with the monotonically non-decreasing property of $\varphi$, we get
\begin{equation}\label{3.21}
   	\varphi(r)\geq \varphi(0)=a>0 ,\quad r>0.
\end{equation}
Since $f$ satisfies \eqref{f0}, we obtain  
   \begin{equation*}
   	f(\varphi(r))\geq f(a)>0,\quad r>0.
\end{equation*}
Then we have $\varphi^{\prime}(r)>0$ for $r>0$.
Similar to Lemma \ref{Lem2.2}, for $1\leq p\leq k$, we have $S_p(D^2u+\mu|D u|I)>0$, which shows that \eqref{k-cone} holds. Hence for this degenerate case, we restrict our discussion within the context of positive solutions. 
We only need to replace the initial value of the Cauchy problem \eqref{Cauchy problem}  with any positive number $a$ and the rest of the proof is similar to the proof of Theorem \ref{Th1.2}.
\end{proof}
\vspace{3mm}

\section{Applications}\label{Section 4}
In this section, we give two applications to Theorem \ref{Th1.2} and Theorem \ref{Th1.3} respectively, which provide the necessary and sufficient conditions on the existence and nonexistence of entire subsolutions to specific equation \eqref{1.1} with two kinds of nonlinearities. 
We also give two examples to verify the existence results.  

The applications of the global solvability are given in the following corollaries.
\begin{Corollary}\label{Corollary4.1}
Let $\alpha\geq 0$ and $f(t)=e^{\alpha t}$, then for either the case $k=1$, $\mu\in(-\infty,\mu_0)$; or the case $2\leq k\leq n$, $\mu \in [0,\mu_0)$, equation \eqref{1.1} admits a subsolution $u\in \Phi_{\mu}^k(\mathbb{R}^n)$ if and only if $\alpha= 0$. For $1\leq k\leq n$, $\mu \in (-\infty,\mu_0)$, if $\alpha>0$, then there exists no entire admissible subsolution of \eqref{1.1}.
\end{Corollary}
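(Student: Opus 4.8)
The plan is to apply Theorem \ref{Th1.2} and Theorem \ref{Th1.1} directly to the explicit nonlinearity $f(t)=e^{\alpha t}$, reducing everything to checking whether the Keller-Osserman condition \eqref{nsc} holds. First I would observe that $f(t)=e^{\alpha t}$ is positive, continuous, and monotone non-decreasing on $\mathbb{R}$ for every $\alpha\ge 0$, so the hypotheses of Theorem \ref{Th1.2} are met in both parameter regimes ($k=1$, $\mu\in(-\infty,\mu_0)$ and $2\le k\le n$, $\mu\in[0,\mu_0)$). Hence the existence of an entire admissible subsolution $u\in\Phi_\mu^k(\mathbb{R}^n)$ is equivalent to the convergence/divergence dichotomy for
\begin{equation*}
	\int^{+\infty}\left(\int_0^\tau f^k(t)\,{\rm d}t\right)^{-\frac{1}{k+1}}{\rm d}\tau
	=\int^{+\infty}\left(\int_0^\tau e^{k\alpha t}\,{\rm d}t\right)^{-\frac{1}{k+1}}{\rm d}\tau.
\end{equation*}

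Next I would compute this integral explicitly in the two cases. If $\alpha=0$, then $f^k\equiv 1$, so $\int_0^\tau f^k(t)\,{\rm d}t=\tau$ and the outer integrand is $\tau^{-1/(k+1)}$; since $1/(k+1)<1$, the integral $\int^{+\infty}\tau^{-1/(k+1)}{\rm d}\tau$ diverges, so \eqref{nsc} holds. If $\alpha>0$, then $\int_0^\tau e^{k\alpha t}\,{\rm d}t=\frac{1}{k\alpha}(e^{k\alpha\tau}-1)$, which grows like $e^{k\alpha\tau}$; raising to the power $-1/(k+1)$ gives an integrand comparable to $e^{-k\alpha\tau/(k+1)}$ near infinity, so the outer integral converges and \eqref{nsc} fails. (Concretely, for $\tau$ large one has $\frac{1}{k\alpha}(e^{k\alpha\tau}-1)\ge \frac{1}{2k\alpha}e^{k\alpha\tau}$, and the comparison integral $\int^{+\infty}(2k\alpha)^{1/(k+1)}e^{-k\alpha\tau/(k+1)}{\rm d}\tau$ is finite.) Combining with Theorem \ref{Th1.2} yields the first assertion: in the stated ranges of $\mu$, a subsolution exists iff $\alpha=0$.

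For the last sentence — nonexistence for all $1\le k\le n$, $\mu\in(-\infty,\mu_0)$ when $\alpha>0$ — I would invoke the nonexistence part (ii) of Theorem \ref{Th1.1} rather than Theorem \ref{Th1.2}, since part (ii) already covers the full range $\mu\in(-\infty,\mu_0)$ including the negative values of $\mu$ for $k\ge 2$ (where by Remark \ref{re2.1} no admissible radial solution can even exist on all of $\mathbb{R}^n$). The hypothesis of Theorem \ref{Th1.1}(ii) is exactly that the integral in \eqref{nsc} is finite, which we verified above for $\alpha>0$; hence equation \eqref{1.1} admits no subsolution $u\in\Phi_\mu^k(\mathbb{R}^n)$.

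There is essentially no obstacle here: the corollary is a direct specialization, and the only genuine content is the elementary asymptotic estimate of $\int_0^\tau e^{k\alpha t}\,{\rm d}t$ and the convergence of an exponential integral, both routine. The one point worth stating carefully is which theorem covers which range of $\mu$ — using Theorem \ref{Th1.2} for the ``iff'' statement (valid for $\mu<\mu_0$ with $k=1$, and $0\le\mu<\mu_0$ with $k\ge2$) and the separate nonexistence statement Theorem \ref{Th1.1}(ii) to reach the broader range $\mu\in(-\infty,\mu_0)$ for all $k$ in the $\alpha>0$ case.
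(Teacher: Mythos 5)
Your proposal is correct and follows essentially the same route as the paper: verify $f=e^{\alpha t}$ satisfies the hypotheses, compute the Keller–Osserman integral explicitly (divergence for $\alpha=0$, convergence for $\alpha>0$ via an elementary exponential comparison), and conclude via Theorems \ref{Th1.2} and \ref{Th1.1}. The only cosmetic difference is in the final nonexistence statement: you invoke Theorem \ref{Th1.1}(ii) as a black box covering all $\mu\in(-\infty,\mu_0)$, whereas the paper handles the range $2\le k\le n$, $\mu<0$ separately by citing Remark \ref{re2.1}; since Theorem \ref{Th1.1}(ii) is itself proved from Theorem \ref{Th1.2} plus Remark \ref{re2.1}, this is the same content packaged slightly more cleanly.
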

\begin{proof}[Proof of Corollary \ref{Corollary4.1}]
	For either the case $k=1$, $\mu\in(-\infty,\mu_0)$; or the case $2\leq k\leq n$, $\mu \in [0,\mu_0)$, if $\alpha=0$, then $f(t)\equiv1$, which satisfies the condition in Theorem \ref{Th1.2}. Moreover, we have  
	\begin{equation*}
		\int^{+\infty}\left(\int_{0}^{\tau}f^k(t)\,{\rm d}t \right) ^{-\frac{1}{k+1}}\,{\rm d}\tau=\int^{+\infty}\left(\int_{0}^{\tau}\,{\rm d}t \right) ^{-\frac{1}{k+1}}\,{\rm d}\tau=+\infty.
	\end{equation*}
Hence, by Theorem \ref{Th1.2}, there exists a subsolution of \eqref{1.1} if and only if $\alpha=0$.

For either the case $k=1$, $\mu\in(-\infty,\mu_0)$; or the case $2\leq k\leq n$, $\mu\in[0,\mu_0)$, if $\alpha>0$, we have $f^{\prime}(t)=\alpha e^{\alpha t}>0$, which also satisfies the condition in Theorem \ref{Th1.2}. Moreover, we have 
    \begin{equation*}
    	\begin{split}
    		\int^{+\infty}\left(\int_{0}^{\tau}f^k(t)\,{\rm d}t \right) ^{-\frac{1}{k+1}}\,{\rm d}\tau
    		=&\int^{+\infty}\left(\int_{0}^{\tau}e^{k\alpha t}\,{\rm d}t \right) ^{-\frac{1}{k+1}}\,{\rm d}\tau\\
    		=&\sqrt[k+1]{k\alpha}\int^{+\infty}\frac{1}{\left( e^{k\alpha\tau }-1\right) ^{\frac{1}{k+1}}}\,{\rm d}\tau.
    	\end{split}
    \end{equation*}
Since 
\begin{equation*}
	\lim_{\tau\to +\infty}\frac{\tau^2}{(e^{k\alpha\tau }-1)^{\frac{1}{k+1}}}
	=\lim_{\tau\to +\infty}\left( \frac{\tau^{2(k+1)}}{e^{k\alpha\tau}-1}\right) ^{\frac{1}{k+1}}=\left(\lim_{\tau\to +\infty} \frac{\tau^{2(k+1)}}{e^{k\alpha\tau}-1}\right) ^{\frac{1}{k+1}}=0,
\end{equation*}
by Cauchy’s criterion for the infinite integral with nonnegative integrand, then the integral 
$$\int^{+\infty}\frac{1}{\left( e^{k\alpha\tau }-1\right) ^{\frac{1}{k+1}}}\,{\rm d}\tau \ {\rm converges}, $$
which implies that 
\begin{equation*}
	\int^{+\infty}\left(\int_{0}^{\tau}f^k(t)\,{\rm d}t \right) ^{-\frac{1}{k+1}}\,{\rm d}\tau<+\infty,
\end{equation*}
for $f(t)=e^{\alpha t}$ with $\alpha >0$.
By Theorem \ref{Th1.2}, we know that equation \eqref{1.1} admits no entire admissible subsolution.
For $2\leq k\leq n$, $\mu <0$, if $\alpha>0$, it follows from Remark \ref{re2.1} that there is no entire subsolution satisfying \eqref{k-cone} to equation \eqref{1.1}. To sum up, we get the desired result.
\end{proof}

\begin{Corollary}\label{Corollary4.2}
	Let $q\geq 0$ and 
	\begin{equation*}
		f(t)=
		\left\{
		\begin{split}
			&t^q,&t>0,\\
			&0,&t\leq0,
		\end{split}
		\right.
	\end{equation*}
then for either the case $k=1$, $\mu\in(-\infty,\mu_0)$; or the case $2\leq k\leq n$, $\mu \in [0,\mu_0)$, equation \eqref{1.1} admits a positive subsolution $u\in \Phi_{\mu}^k(\mathbb{R}^n)$ if and only if $q\leq1$.
\end{Corollary}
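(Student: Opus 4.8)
The plan is to deduce Corollary \ref{Corollary4.2} as a direct specialization of Theorem \ref{Th1.3}. Since the nonlinearity $f$ in the corollary is degenerate at the origin, the positive-subsolution framework of Theorem \ref{Th1.3} is precisely the right setting, and the first thing I would do is check that $f$ satisfies hypothesis \eqref{f0}. For $q>0$ this is immediate: $f(t)=t^q$ is continuous on $\mathbb{R}$ (continuity at the origin uses $q>0$), strictly positive and monotonically non-decreasing on $(0,+\infty)$, and identically zero on $(-\infty,0]$. Hence Theorem \ref{Th1.3} applies for either $k=1$, $\mu\in(-\infty,\mu_0)$, or $2\le k\le n$, $\mu\in[0,\mu_0)$, and the existence of a positive entire subsolution of \eqref{1.1} is equivalent to the Keller--Osserman condition \eqref{nsc}.

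The second step is to evaluate \eqref{nsc} for $f(t)=t^q$. Since $qk+1\ge 1>0$, one has
\begin{equation*}
	\int_{0}^{\tau}f^k(t)\,{\rm d}t=\int_{0}^{\tau}t^{qk}\,{\rm d}t=\frac{\tau^{qk+1}}{qk+1},
\end{equation*}
so that
\begin{equation*}
	\left(\int_{0}^{\tau}f^k(t)\,{\rm d}t\right)^{-\frac{1}{k+1}}=(qk+1)^{\frac{1}{k+1}}\,\tau^{-\frac{qk+1}{k+1}}.
\end{equation*}
Consequently \eqref{nsc} holds if and only if $\int^{+\infty}\tau^{-\frac{qk+1}{k+1}}\,{\rm d}\tau=+\infty$, i.e. if and only if $\tfrac{qk+1}{k+1}\le 1$; dividing the rearranged inequality $qk\le k$ by $k>0$ gives exactly $q\le 1$. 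Combined with the equivalence from Theorem \ref{Th1.3}, this would complete the proof for every $q>0$.

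It remains to dispose of the endpoint $q=0$, which sits just outside the continuity hypothesis since then $f$ jumps at the origin. Here I would argue by hand: with $q=0$ we have $q\le 1$ and must exhibit a positive subsolution. Applying Theorem \ref{Th1.2} (or Corollary \ref{Corollary4.1} with $\alpha=0$) to the constant nonlinearity $f\equiv 1$ yields an entire admissible $v$ with $S_k^{1/k}(D^2 v+\mu|D v|I)\ge 1$, and since adding a constant changes neither $D^2 v$ nor $|D v|$, the function $u=v+C$ with $C$ large is positive and still satisfies $S_k^{1/k}(D^2u+\mu|D u|I)\ge 1=f(u)$. Thus a positive subsolution exists when $q=0$ as well. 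There is no genuine obstacle in this argument; the only points requiring attention are that the corollary's hypotheses force us into the degenerate positive-solution regime of Theorem \ref{Th1.3} rather than the sign-free Theorem \ref{Th1.2}, and that the endpoint $q=0$ must be treated separately.
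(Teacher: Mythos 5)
Your argument is correct and follows the same overall route as the paper: specialize Theorem~\ref{Th1.3}, compute $\int_0^\tau f^k = \tau^{qk+1}/(qk+1)$, and read off that the Keller--Osserman integral \eqref{nsc} diverges precisely when $(qk+1)/(k+1)\le 1$, i.e.\ $q\le 1$. The one place where you do something the paper does not is the endpoint $q=0$, and your instinct there is sound: when $q=0$ the function $f$ jumps from $0$ to $1$ across the origin, so $f\notin C(\mathbb{R})$ and the hypothesis of Theorem~\ref{Th1.3} (and of Lemmas~\ref{Lem2.2}--\ref{Lem2.3}, which demand continuity) is not literally met. The paper simply folds $q=0$ into the range $0\le q\le 1$ and invokes Theorem~\ref{Th1.3} without comment, which is a small gap. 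Your fix — construct a positive entire admissible $u$ satisfying $S_k^{1/k}(D^2u+\mu|Du|I)\ge 1$ by applying Theorem~\ref{Th1.2} (equivalently Corollary~\ref{Corollary4.1} with $\alpha=0$) to the genuinely continuous nonlinearity $f\equiv 1$, and then shift by a constant to make it positive — is valid, since shifting does not alter $D^2u$ or $Du$ and $f(u)\le 1$ for every $u$ when $q=0$. (One could shorten this slightly by choosing the initial value $a>0$ in the Cauchy problem \eqref{Cauchy problem}, so that $\varphi(r)\ge a>0$ directly, but adding a constant after the fact works equally well.) In short, your proof is correct, matches the paper's strategy, and is a shade more careful than the paper at the boundary case $q=0$.
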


\begin{proof}[Proof of Corollary \ref{Corollary4.2}]
	For either the case $k=1$, $\mu\in(-\infty,\mu_0)$; or the case $2\leq k\leq n$, $\mu \in [0,\mu_0)$, if $0\leq q \leq 1$, then we have 
	\begin{equation*}
		\begin{split}
			\int^{+\infty}\left(\int_{0}^{\tau}f^k(t)\,{\rm d}t \right) ^{-\frac{1}{k+1}}\,{\rm d}\tau
			=&\int^{+\infty}\left(\int_{0}^{\tau}t^{kq}\,{\rm d}t \right) ^{-\frac{1}{k+1}}\,{\rm d}\tau\\
			=&\sqrt[k+1]{kq+1}\int^{+\infty} \tau^{-\frac{kq+1}{k+1}}\,{\rm d}\tau.
		\end{split}
	\end{equation*}
Since 
\begin{equation*}
\lim_{\tau\to +\infty}\tau^{\frac{kq+1}{k+1}} \tau^{-\frac{kq+1}{k+1}}=1
\end{equation*}
and $\frac{kq+1}{k+1}\leq 1$, by Cauchy’s criterion for the infinite integral with nonnegative integrand, then the infinite integral $\int^{+\infty} \tau^{-\frac{kq+1}{k+1}}\,{\rm d}\tau$ diverges, therefore we have 
\begin{equation*}
\int^{+\infty}\left(\int_{0}^{\tau}f^k(t)\,{\rm d}t \right) ^{-\frac{1}{k+1}}\,{\rm d}\tau=+\infty.
\end{equation*}
Besides, we have $\frac{kq+1}{k+1}>1$ for $q>1$. Again by Cauchy’s criterion, the infinite integral $\int^{+\infty} \tau^{-\frac{kq+1}{k+1}}\,{\rm d}\tau$ converges, that is,
\begin{equation*}
\int^{+\infty}\left(\int_{0}^{\tau}f^k(t)\,{\rm d}t \right) ^{-\frac{1}{k+1}}\,{\rm d}\tau<+\infty, 
\end{equation*}
which does not satisfy the condition \eqref{nsc}.
By Theorem \ref{Th1.3}, we know that equation \eqref{1.1} admits a positive subsolution $u\in \Phi_{\mu}^k(\mathbb{R}^n)$ if and only if $0\leq q\leq1$.
\end{proof}

Moreover, we give an example to the existence result in Theorem \ref{Th1.1} with a power-type nonlinearity.
	\begin{Example}\label{Example 4.1}
	In $\mathbb{R}^n$, for $k=1, \cdots, n$, $\mu\ge 0$, then the function $u(x):=e^{Ax^2}>0$ with $A\geq \frac{1}{2}\left(\frac{1}{C_n^k} \right)^{\frac{1}{k}}$ is an entire subsolution to equation
	\begin{equation}\label{4.1}
		S_k^{\frac{1}{k}}(D^2u+\mu|D u|I)= u^\alpha
	\end{equation}
	with  $\alpha\leq1$.
	\end{Example}

	\begin{proof}
	Letting $\omega(x)=e^{Ax^2}$,
	then 
	\begin{equation*}
		\mu |D \omega|I=\mu 2Ae^{Ax^2}|x|\delta_{ij},
	\end{equation*}
	and
	\begin{equation*}
		D^2\omega+\mu |D \omega|I=4A^2e^{Ax^2}\left[ x_ix_j+\frac{1}{2A}\left( 	1+\mu |x|\right) \delta_{ij}\right].
	\end{equation*}
	It follows from \eqref{ab} that  $\lambda(D^2\omega+\mu |D \omega|I)$ can be denoted as 
	\begin{equation*}
		\lambda(D^2\omega+\mu |D \omega|I)=
		\left( 4A^2e^{Ax^2}\left( |x|^2+\frac{1}{2A}(1+\mu |x|)\right) , 	2Ae^{Ax^2}(1+\mu |x|),\cdots, 2Ae^{Ax^2}(1+\mu |x|)\right).
	\end{equation*}
	By the definition of $k$-Hessian operator, we have
	\begin{equation*}
		\begin{split}
			&S_k\left( \lambda(D^2\omega+\mu |D \omega|I)\right)\\
			=&C_{n-1}^{k-1} 4A^2e^{Ax^2}\left( |x|^2+\frac{1}{2A}(1+\mu |x|)\right) 
			\left[ 2Ae^{Ax^2}(1+\mu |x|)\right]^{k-1}+C_{n-1}^k \left[ 2Ae^{Ax^2}(1+\mu |x|)\right]^k\\
			=&C_{n-1}^{k-1} 4A^2e^{Ax^2} |x|^2\left[ 2Ae^{Ax^2}(1+\mu |x|)\right]^{k-1}+
			C_n^k\left[ 2Ae^{Ax^2}(1+\mu |x|)\right]^k\\
			\geq& C_n^k\left[ 2Ae^{Ax^2}\right]^k 
			\geq e^{kAx^2},
		\end{split}
	\end{equation*}
	provided $A\geq \frac{1}{2}\left(\frac{1}{C_n^k} \right)^{\frac{1}{k}} $. 
	Therefore we have 
	$S_k\left( \lambda(D^2\omega+\mu |D \omega|I)\right)\geq \omega^k$
	all $k=1,\cdots,n$. 
	For $\alpha\leq 1$, combining with $\omega=e^{Ax^2}>1$, we have 
	\begin{equation}\label{4.similar}
		S_k\left( \lambda(D^2\omega+\mu |D \omega|I)\right)\geq \omega^k \geq 	\omega^{k\alpha},
	\end{equation}
	which implies that $\omega$ is a positive entire subsolution to equation \eqref{4.1} with $\alpha \le 1$.
	\end{proof}
Note that the index $\alpha$ of power function here can be negative, whose range is larger than the existence condition in Theorem \ref{Th1.1}. This means that when the nonlinearity is monotonically decreasing, the existence result takes effect as well. 

In the end, we give an example showing that even though $\mu<0$ for the case $k=1$, we can still explicitly construct an entire subsolution to \eqref{1.1}.

	\begin{Example}\label{Example 4.2}
	In $\mathbb{R}^n$, for $k=1$, $\mu<0$, then the function $u(x):=e^{Ax^2}>0$ with $A\geq \frac{1}{2n}+\frac{1}{8}n\mu^2$ is an entire subsolution to equation 
	\begin{equation}\label{4.2}
		S_1(D^2u+\mu |D u|I)= u^\alpha,
	\end{equation}
	with  $\alpha\leq1$.
	\end{Example}
	
	\begin{proof}
	Letting $\omega(x):=e^{Ax^2}$, 
	then $|D \omega|=2Ae^{Ax^2}|x|$ and  $\Delta\omega=4A^2e^{Ax^2}x^2+2Ane^{Ax^2}$.
	By the definition of $k$-Hessian operator, we have 
	\begin{equation}\label{4.3}
		\begin{split}
			S_1\left( \lambda(D^2\omega+\mu |D \omega|I)\right)
			&=tr(D^2\omega+\mu |D \omega|I)\\
			&= \Delta\omega+n\mu |D \omega|\\
			&=e^{Ax^2}\left(  4A^2x^2+2An+2An\mu|x|\right),
		\end{split}
	\end{equation}
	where $tr(D^2\omega+\mu |D \omega|I)$ denotes the trace of the matrix $D^2\omega+\mu |D \omega|I$.
	By Cauchy's inequality, we have 
	\begin{equation}\label{4.000}
		\begin{split}
			-2An\mu|x|=2(A|x|)(-n\mu) \leq 4A^2x^2+\frac{1}{4}n^2\mu ^2 \le 4A^2x^2+ (2An-1),
		\end{split}
	\end{equation}
	provided $A>0$ and $\mu<0$, where $A\geq \frac{1}{2n}+\frac{1}{8}n\mu^2$ is used in the last inequality. From \eqref{4.000}, we get
        $$4A^2x^2+2An+2An\mu|x| \geq 1,$$
        which leads to
        	\begin{equation}\label{4.4}
	e^{Ax^2}\left(  4A^2x^2+2An+2An\mu|x|\right) \geq e^{Ax^2}.
	\end{equation} 
	Therefore, we have proved that 
	$S_1\left( \lambda(D^2\omega+\mu |D \omega|I)\right)\geq \omega$.
	Similar to \eqref{4.similar}, we have 
        $$S_1\left( \lambda(D^2\omega+\mu |D \omega|I)\right)\geq \omega \ge \omega^{\alpha},$$
	for $\alpha \le 1$.
	\end{proof}
\vspace{3mm}

In fact, it is Example \ref{Example 4.2} that motivates us to consider the ``$k=1, \mu<0$'' case separately, throughout the paper.

\bibliographystyle{amsplain}

\begin{thebibliography}{s2}
\bibitem{SMA} S. Alarc\'{o}n, J. Garc\'{i}a-Meli\'{a}n and A. Quaas, \textit{Keller-Osserman type conditions for some elliptic problems with gradient terms}, J. Differential Equations, 252 (2012), 886--914.	

\bibitem{BG} C. Bandle, E. Giarrusso, \textit{Boundary blow up for semilinear elliptic equations with nonlinear gradient terms}, Adv. Differential Equations, 1 (1996), 133--150.

\bibitem{BJL} J. Bao, X. Ji and H. Li, \textit{Existence and nonexistence theorem for entire subsolutions of $k$-Yamabe type equations}, J. Differential Equations, 253 (2012), 2140--2160.

\bibitem{CNS} L. Caffarelli, L. Nirenberg and J. Spruck, \textit{The Dirichlet problem for nonlinear second order elliptic equations III: Functions of the eigenvalues of the Hessian}, Acta Math., 155 (1985), 261--301.

\bibitem{CDLV1} I. Capuzzo Dolcetta, F. Leoni and A. Vitolo, \textit{Entire subsolutions of fully nonlinear degenerate elliptic equations}, Bull. Inst. Math. Acad. Sin., 9 (2014), 147--161.

\bibitem{CDLV2} I. Capuzzo Dolcetta, F. Leoni and A. Vitolo, \textit{On the inequality $F(x, D^2u)\geq f(u)+g(u)|Du|^q$}, Math. Ann., 365 (2016), 423--448.

\bibitem{CDLV3} I. Capuzzo Dolcetta, F. Leoni and A. Vitolo, \textit{Generalized Keller-Osserman conditions for fully nonlinear degenerate elliptic equations}, J. Math. Sci., 260 (2022), 469--479.

\bibitem{CW} K. Chou, X.-J. Wang, \textit{A variational theory of the Hessian equation}, Comm. Pure Appl. Math., 54 (2001), 1029--1064.

\bibitem{C} J. Cui, \textit{Existence and nonexistence of entire $k$-convex radial solutions to Hessian type system}, Adv. Difference Equ., (2021), Paper No. 462, 9pp.

\bibitem{D1} L. Dai, H. Li, \textit{Entire subsolutions of Monge-Am\`{e}re type equations}, Commun. Pure Appl. Anal. 19 (2020), 19--30.

\bibitem{D2} L. Dai, \textit{Existence and nonexistence of subsolutions for augmented Hessian equations}, Discrete Contin. Dyn. Syst., 40 (2020), 579--596.

\bibitem{G2} B. Guan, \textit{The Dirichlet problem for Hessian equations on Riemannian manifolds}, Calc. Var. Partial Differential Equations, 8 (1999), 45--69.

\bibitem{G} B. Guan, \textit{Second-order estimates and regularity for fully nonlinear elliptic equations on Riemannian manifolds}, Duke Math. J., 163 (2014), 1491--1524.

\bibitem{GH} C.E. Gutiérrez, Q. Huang, \textit{The refractor problem in reshaping light beams}, Arch. Rat. Mech. Anal., 193 (2009), 423--443.

\bibitem{HJL} Y. Huang, F. Jiang and J. Liu, \textit{Boundary $C^{2,\alpha}$ estimates for Monge-Amp\`{e}re type equations}, Adv. Math., 281 (2015), 706--733.

\bibitem{JJD} J. Ji, F. Jiang and B. Dong, \textit{On the solutions to weakly coupled system of $k_i$-Hessian equations}, J. Math. Anal. Appl., 513 (2022), Paper No. 126217, 25pp.

\bibitem{JB} X. Ji, J. Bao, \textit{Necessary and sufficient conditions on solvability for Hessian inequalities}, Proc. Am. Math. Soc., 138 (2010), 175--188.

\bibitem{J} H. Jian, \textit{Hessian equations with infinite Dirichlet boundary value}, Indiana Univ. Math. J., 55 (2006), 1045--1062.

\bibitem{JTY} F. Jiang, N.S. Trudinger and X. Yang, \textit{On the Dirichlet problem for Monge-Amp\`{e}re type equations}, Calc. Var. Partial Differential Equations,  49 (2014), 1223--1236.

\bibitem{JT1} F. Jiang, N.S. Trudinger, \textit{Oblique boundary value problems for augmented Hessian equations I,II,III}, Bull. Math. Sci., 8 (2018), 353--411; Nonlinear Anal., 154 (2017), 148--173; Comm. Partial Differential Equations, 44 (2019), 708--748.

\bibitem{JT2} F. Jiang, N.S. Trudinger, \textit{On the Dirichlet problem for general augmented Hessian equations}, J. Differential Equations, 269 (2020), 5204--5227.
	
	
\bibitem{K}	J.B. Keller, \textit{On solutions of $\Delta u = f(u)$}, Comm. Pure Appl. Math., 10 (1957), 503--510.

\bibitem{LLions} J.M. Lasry, P.L. Lions, \textit{Nonlinear elliptic equations with singular boundary conditions and stochastic control with state constraints. I. The model problem}, Math. Ann., 283 (1989), 583--630.
	
\bibitem{LL} A. Li, Y. Li, \textit{On some conformally invariant fully nonlinear equations. II. Liouville, Harnack and Yamabe}, Acta Math., 195 (2005), 117--154.

\bibitem{LTW} J. Liu, N.S. Trudinger and X.-J. Wang, \textit{Interior $C^{2,\alpha}$ regularity for potential functions in optimal transportation}, Comm. Partial Differential Equations, 35 (2010), 165--184.

\bibitem{MTW} X. Ma, N.S. Trudinger and X.-J. Wang, \textit{Regularity of potential functions of the optimal transportation problem}, Arch. Rat. Mech. Anal., 177 (2005), 151--183.

\bibitem{O}	R. Osserman, \textit{On the inequality $\Delta u \geq f(u)$}, Pacific J. Math., 7 (1957), 1641--1647.

\bibitem{PV} A. Porretta, L. Veron, \textit{Symmetry of large solutions of nonlinear elliptic equations in a ball}, J. Funct. Anal., 236 (2006), 581--591.

\bibitem{S} P. Salani, \textit{Boundary blow-up problems for Hessian equations}, Manuscripta Math., 96 (1998), 281--294.

\bibitem{Sch} F. Schulz, \textit{Regularity theory for quasilinear elliptic systems and Monge-Amp\`ere equations in two dimensions}, Lecture Notes in Mathematics, 1445. Springer-Verlag, Berlin, 1990.

\bibitem{T} N. Trudinger, \textit{On the Dirichlet problem for Hessian equations}, Acta Math., 175 (1995), 151--164.


\bibitem{TW} N. Trudinger, X.-J. Wang, \textit{Hessian measures I,II,III}, Topol. Methods Nonlinear Anal., 10 (1997), 225-239; Ann. of Math., 150 (1999), 579--604; J. Funct. Anal., 193 (2002), 1--23.

\bibitem{U} J. Urbas, \textit{Hessian equations on compact Riemannian manifolds}, Nonlinear problems in mathematical physics and related topics, II, Int. Math. Ser., 2 (2002), 367--377.

\bibitem{V} J. Viaclovsky, \textit{Conformal geometry, contact geometry, and the calculus of variations}, Duke Math. J., 101 (2000), 283--316.

\bibitem{W1} X.-J. Wang, \textit{On the design of a reflector antenna II}, Calc. Var. Partial Differential Equations, 20 (2004), 329--341.

\bibitem{ZQ} Z. Zhang, Z. Qi, \textit{On a power-type coupled system of Monge-Amp\`{e}re equations}, Topol. Methods Nonlinear Anal., 46 (2015), 717--729.

\bibitem{ZZ} Z. Zhang, S. Zhou, \textit{Existence of entire positive $k$-convex radial solutions to Hessian equations and systems with weights}, Appl. Math. Lett., 50 (2015), 48--55.

\end{thebibliography}

\end{document}